\newcommand{\projectivespace}[0]{\mathbb{P}^N}
\newcommand{\variety}[0]{X}
\newcommand{\degree}[0]{d_X}
\newcommand{\dualvariety}[0]{X^{\vee}}
\newcommand{\segreembedding}[0]{X\times \mathbb{P}^{n-1}}
\newcommand{\hyperdiscriminant}[0]{\Delta_{X\times \mathbb{P}^{n-1}}}
\newcommand{\hyperdiscriminantsurface}[0]{\Delta_{X\times \mathbb{P}^{1}}}
\newcommand{\toricvariety}[0]{X_A}
\newcommand{\conv}[0]{\mathrm{Conv}}
\newcommand{\torus}[0]{(\mathbb{C}^\times)^{N+1}}
\newcommand{\GL}[0]{{\mathrm{GL}(N+1,\mathbb{C})}}
\newcommand{\hurwitz}[0]{{\mathrm{Hu}_{X}}}
\newcommand{\hurpolytope}[0]{{\mathcal{W}(\mathrm{Hu}_{X})}}
\newcommand{\secondarypoly}[0]{{\mathrm{SecPoly}(A)}}
\newcommand{\hyperdiscriminantpolytope}[0]{\mathcal{W}(\Delta_{X\times \mathbb{P}^{n-1}})}
\newcommand{\hyperdiscriminantpolytopeh}[0]{\mathcal{W}_H(\Delta_{X\times \mathbb{P}^{n-1}})}
\newcommand{\polytope}[0]{Q}
\newcommand{\vol}[0]{\mathrm{Vol}_{\mathbb{Z}}}
\newcommand{\hyperdiscriminantpolytopesurface}[0]{\mathcal{W}(\Delta_{X\times \mathbb{P}^{1}})}
\newcommand{\pointupper}[0]{\widetilde{\omega}^{+}}
\newcommand{\pointlower}[0]{\widetilde{\omega}^{-}}
\newcommand{\columnA}[0]{\widetilde{Q}}
\newcommand{\columnAtriangulation}[0]{\widetilde{T}}
\newcommand{\chowpolytope}[0]{\mathcal{W}(R_X)}
\newcommand{\chowpolytopeh}[0]{\mathcal{W}_H(R_X)}
\newcommand{\chowform}[0]{R_X}
\newcommand{\mr}[0]{M_{\mathbb{R}}}
\newtheorem{theorem}{Theorem}[section]
\newtheorem{lemma}[theorem]{Lemma}
\newtheorem{corollary}[theorem]{Corollary}
\newtheorem{proposition}[theorem]{Proposition}
\newtheorem{conjecture}[theorem]{Conjecture}
\theoremstyle{definition}
\newtheorem{example}[theorem]{Example}
\newtheorem{definition}[theorem]{Definition}
\newtheorem{question}[theorem]{Question}
\newtheorem{remark}[theorem]{Remark}
\begin{document}

\title{
	Characteristic vectors for the Hurwitz polytopes of toric varieties
}
\author{Ryoma Ogusu and Yuji Sano}
\address{
Department of Applied Mathematics
    Fukuoka University
    8-19-1 Nanakuma, Jonan-ku, Fukuoka 814-0180, JAPAN
}
\
\email{
sd210002@cis.fukuoka-u.ac.jp
}
\email{
sanoyuji@fukuoka-u.ac.jp
}
\date{}

\thanks{
The authors would like to thank Naoto Yotsutani for sharing his insights to the works of S. Paul.
The second author is supported by JSPS KAKENHI Grant Number 22K03325 and Research funds from Fukuoka University (Grant Number 225001-000).}

\begin{abstract}
We introduce a characteristic vector with respect to a regular triangulation of the momentum polytope to compute the Hurwitz polytope of a given smooth toric variety.
As a result, we prove that the convex hull of such vectors of all regular triangulations is included in the Hurwitz polytope of a smooth toric surface.
In addition, we discuss the relations of such vectors to $K$-stability of pairs by Paul and toric $K$-stability by Donaldson.
\end{abstract}

\maketitle

\section{Introduction} \label{sec:introduction}

Let $\mathbb{G}(k, \mathbb{P}^N)$ denotes the Grassmannian of all $k$-dimensional linear subspaces in the $N$-dimensional projective space $\mathbb{P}^N$.
Let $\variety$ be an $n$-dimensional closed irreducible variety in  $\projectivespace$.
Through this note, we assume that $\variety$ is linearly normal and that both its degree $\degree$ and the codimension of the singular locus of $\variety$ are greater or equal to two.
For $1\le k \le n+1$, generic $(N-k)$-dimensional planes in $\projectivespace$ intersects $\variety$ transversally at any regular point of $\variety$.
The Zariski closure of the set of all $(N-k)$-dimensional planes intersecting $X$ non-transversally at some regular point of $\variety$ constitutes an irreducible variety in $\mathbb{G}(N-k, \mathbb{P}^N)$.
This variety is called the associated variety of $\variety$ in \cite{gkz94}.
In \cite{kohn21}, these varieties are studied as the $(n-k+1)$-th coisotropic variety of $\variety$.
These varieties contain some classical objects: the Chow form (the resultant) if $k=n+1$ and the discriminant if $k=1$.
In \cite{sturmfels17}, Sturmfels shows that the associated variety of $k=n$ is always a hypersurface of $\mathbb{G}(N-n, \mathbb{P}^N)$, and calls its defining polynomial the Hurwitz form.
The overall goal of this note is to find a combinatorial way to compute the weight polytope of the Hurwitz form of a smooth toric variety.

In \cite{sturmfels17}, Sturmfels shows that the Hurwitz form coincides with the discriminant of the Segre embedding of $\variety\times \mathbb{P}^{n-1}$.
The latter is studied by Paul as the hyperdiscriminant of $X$ in the viewpoint of K\"ahler geometry \cite{paul12}. 
Following Tian’s pioneering work \cite{tian94}, Paul (also see  \cite{tian18, paul21}) extends  Geometric Invariant Theory \cite{git94} in terms of the weight polytopes of the Chow form and the hyperdiscriminant of $\variety$.
Hence we will see a relation of the above two polytopes for further study of the stability of Paul in Section \ref{sec:k-stability}.

When $\variety$ is a toric variety $X_A$ associated with a given point configuration $A\subset \mathbb{Z}^n$, we can apply combinatorial tools for the computations of the Chow/discriminant polytopes developed by Gelfand, Kapranov and Zelevinsky \cite{gkz94}. 
In practice, we can carry out such computations by using mathematical software systems, for example, SageMath \cite{sagemath} with TOPCOM \cite{rambau02}
and Macaulay2 \cite{gs}.
Still, however, the computations are a challenging problem in both theoretical and computational viewpoints (cf. \cite{kl20}) if $A$ has many points.

To explain our interest more precisely, let us see an example.
Let $A$ be the seven points in the plane given by
\begin{equation}\label{eq:pointconfiguration}
	A
	=
	\begin{bmatrix}
			0 & 0 & 1 & 1 & 0 & -1 & 0
		\\
			0 & 1 & 1 & 0 & -1 & -1 & -1
		\\
	\end{bmatrix}.
\end{equation}
(Figure \ref{pic:pointconfiguration}).
\begin{figure}
\begin{tikzpicture}
	\fill[black] (0,1)node[above]{(1)} circle(0.05);
	\fill[black] (1,1)node[right]{(2)} circle(0.05);
	\fill[black] (1,0)node[right]{(3)} circle(0.05);
	\fill[black] (0,-1)node[right]{(4)} circle(0.05);
	\fill[black] (-1,-1)node[left]{(5)} circle(0.05);
	\fill[black] (-1,0)node[left]{(6)} circle(0.05);
	\fill[black] (0,0)node[right]{(0)}circle(0.05);
	\draw (0,1)--(1,1)--(1,0)--(0,-1)--(-1,-1)--(-1,0)--cycle;
\end{tikzpicture}	
\caption{Point Configuration $A$}
\label{pic:pointconfiguration}
\end{figure}
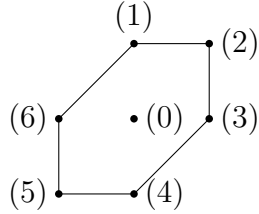
The associated toric variety $\toricvariety$ is the blow up of $\mathbb{P}^2$ along torus invariant three points with anti-cnonical polarization, which is embedded in $\mathbb{P}^6$.
GKZ theory says that the Chow polytope coincides with the secondary polytope $\Sigma(A)$ of $A$ that is the convex hull of the GKZ vector of all regular triangulations of the convex hull $Q$ of $A$ (\cite{ksz92}, \cite{gkz94}).
As for this example, the polygon $(Q,A)$ has $32$ regular triangulations.
On the other hand, the Hurwitz form coincides with the discriminant of the Segre embedding 
$$
	\toricvariety\times \mathbb{P}^1
	\subset
	\mathbb{P}^6\times \mathbb{P}^1
	\hookrightarrow
	\mathbb{P}^{13}.
$$
The variety $\toricvariety\times \mathbb{P}^1$ is toric, and we denote its associated point configuration by $\widetilde{A}$ consisting of the $14$ points in $\mathbb{R}^3$.
By GKZ theory, the discriminant polytope of $\widetilde{A}$ is the convex hull of the massive GKZ vectors of all regular triangulations of $(\columnA, \widetilde{A})$.
As for this example, the polytope $(\columnA, \widetilde{A})$ admits $928930$ regular triangulations. 
This means that the computation of the Hurwitz polytope as the discriminant polytope would be harder than the Chow polytope.
An interest of this note aims at solving the gap between the computations of these two polytopes.

By definition, we can regard the Hurwitz form as an intermediate between the Chow form and the discriminant as the defining polynomial of the associated variety.
Both the Chow polytope and the discriminant polytope are computed from the GKZ vectors and the massive GKZ vectors of regular triangulations of $(Q,A)$ respectively.
This observation raises the following question.
\begin{question}\label{question}
	Is it possible to describe the Hurwitz polytope as the convex hull of some characteristic vectors associated with regular triangulations of $(Q,A)$?	
\end{question}
To solve Question \ref{question}, we introduce a characteristic vector (we call it the Hurwitz vector in Definition \ref{def:hurwitzvector}) as an intermediate vector between the GKZ vector and the massive GKZ vector.
By using these vectors, we give a partial answer\footnote{After completing the first draft version of this note, the second author found another approach to Question \ref{question} by employing some results in K\"ahler geometry. The full answer to Question \ref{question} will be provided in \cite{sano}.} to Question \ref{question}.
\begin{theorem}\label{thm:main_intro}
	Let $A$ be a point configuration $A \subset \mathbb{Z}^{2}$.
	Let $\polytope$ be its convex hull.
	Assume that the associated toric surface $\toricvariety$ is smooth.
	Then the Hurwitz polytope of $\toricvariety$ contains the convex hull of the Hurwitz vectors of all regular triangulations of $(Q,A)$.
\end{theorem}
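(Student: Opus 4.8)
The plan is to exhibit $\hurpolytope$ as a linear image of the Newton polytope of the discriminant of $\widetilde{A}$, and then to identify each Hurwitz vector with the image, under this linear map, of the massive GKZ vector of a canonically lifted triangulation.

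First I would fix the relevant torus actions. Since $\hurwitz$ equals the discriminant of the Segre variety $\toricvariety\times\mathbb{P}^1\subset\mathbb{P}^{13}$ \cite{sturmfels17}, it is, as an $A$-discriminant, a polynomial in coefficients indexed by the $2|A|$ points of $\widetilde{A}$, and the maximal torus of $\mathrm{GL}(2|A|,\mathbb{C})$ produces its Newton polytope $N(\Delta_{\widetilde{A}})\subset\mathbb{R}^{\widetilde{A}}$. The weight polytope $\hurpolytope$, however, is taken with respect to the maximal torus $\torus$ of $\GL$ of the ambient $\projectivespace$ of $\toricvariety$, which acts on $\mathbb{P}^{13}$ diagonally through the Segre embedding. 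Under that embedding the Segre coordinate attached to $(a,i)\in\widetilde{A}$ is scaled with weight $e_a$, independent of the $\mathbb{P}^1$-index $i\in\{0,1\}$. Consequently $\hurpolytope=\pi\bigl(N(\Delta_{\widetilde{A}})\bigr)$, where $\pi\colon\mathbb{R}^{\widetilde{A}}\to\mathbb{R}^{A}$ is the linear projection that sums, for each $a\in A$, the two fiber coordinates lying over $a$.

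Next I would invoke the GKZ description of the discriminant polytope: $N(\Delta_{\widetilde{A}})$ is the convex hull of the massive GKZ vectors of all regular triangulations of $(\columnA,\widetilde{A})$. As $\pi$ is linear and $\hurpolytope$ is convex, it is enough to produce, for every regular triangulation $T$ of $(\polytope,A)$, a single regular triangulation $\columnAtriangulation$ of $(\columnA,\widetilde{A})$ whose massive GKZ vector is sent by $\pi$ exactly to the Hurwitz vector of $T$ from Definition \ref{def:hurwitzvector}. Granting this, the Hurwitz vector of $T$ lies in $\pi\bigl(N(\Delta_{\widetilde{A}})\bigr)=\hurpolytope$ for every $T$, whence the convex hull of all Hurwitz vectors is contained in $\hurpolytope$. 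The lift $\columnAtriangulation$ is the canonical triangulation of the prism $\columnA=\polytope\times[0,1]$ induced by $T$: over each maximal simplex of $T$ one places the standard triangulation of the prism on the lifted vertices $\pointupper,\pointlower$, compatibly along shared faces via a fixed linear order on $A$. This is exactly where smoothness of $\toricvariety$ enters, since it forces the regular triangulations of $(\polytope,A)$ to be unimodular, so that each prism over a triangle of $T$ splits into unimodular tetrahedra; regularity of $\columnAtriangulation$ then follows by adding to a height function realizing $T$ a generic affine height in the $\mathbb{P}^1$-direction, producing the triangulation with fan $\columnAfan$.

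The heart of the argument, and the step I expect to be the main obstacle, is the computation identifying $\pi$ applied to the massive GKZ vector of $\columnAtriangulation$ with the Hurwitz vector of $T$. Two features must be reconciled. First, the massive GKZ vector is not the ordinary secondary (GKZ) vector of $\columnAtriangulation$: passing from the principal $A$-determinant of $\widetilde{A}$ to the discriminant $\Delta_{\widetilde{A}}$ strips off the factors coming from the proper faces of the prism, namely the two copies $\polytope\times\{0\}$ and $\polytope\times\{1\}$ of $\polytope$ together with the vertical walls over $\partial\polytope$. I would track these boundary factors explicitly and show that, after the fiber-sum $\pi$, they reassemble into precisely the intermediate correction built into the definition of the Hurwitz vector. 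Second, I must check that the unimodular volumes of the tetrahedra of $\columnAtriangulation$, once summed over the two lifts of each $a$, reproduce the prescribed entries of the Hurwitz vector. The reason the theorem yields only an inclusion rather than an equality is that the canonical lifts $T\mapsto\columnAtriangulation$ realize only a small proportion of the regular triangulations of $(\columnA,\widetilde{A})$ -- for the running example, $32$ against $928930$ -- so their projected massive GKZ vectors need not account for every vertex of $\hurpolytope$.
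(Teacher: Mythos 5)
Your overall strategy coincides with the paper's: realize $\hurpolytope$ as the image of the Newton polytope of the discriminant of $\widetilde{A}$ under the projection $\pi$ that sums the two fiber coordinates over each point of $A$ (this is Proposition \ref{prop:vertices_nuvector}), lift each regular triangulation $T$ of $(Q,A)$ to a regular triangulation $\columnAtriangulation$ of the prism $(\columnA,\widetilde{A})$ refining the subdivision into sub-prisms $\triangle\times I$ (the paper's ``vertical regular triangulation''), and identify $\pi$ applied to the massive GKZ vector of $\columnAtriangulation$ with $\xi_T$. However, that identification is precisely the content of the theorem, and you explicitly set it aside as the ``main obstacle'' without carrying it out; as written, the proposal is a correct plan rather than a proof. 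The paper settles this step in Proposition \ref{prop:enumerate} by a direct case analysis according to whether $\omega_i$ is a vertex of $Q$, an interior point of an edge of $Q$, or an interior point of $Q$, computing $\eta_{\columnAtriangulation,k}(\pointupper_i)+\eta_{\columnAtriangulation,k}(\pointlower_i)$ for $k=0,1,2,3$ in each case; the only nontrivial input is Lemma \ref{lem:simplex_hurwitz}, which asserts that for a triangular prism over a lattice triangle $\triangle$ the top-dimensional contribution $\eta_{\columnAtriangulation,3}(\pointupper_i)+\eta_{\columnAtriangulation,3}(\pointlower_i)=4\vol(\triangle)$ is the same for all six triangulations of the prism, so that the answer does not depend on which refinement $\columnAtriangulation$ is chosen. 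You would need to supply this computation (or an equivalent bookkeeping of the boundary factors you mention) for the argument to be complete.

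Two smaller points. First, smoothness of $\toricvariety$ does not force regular triangulations of $(Q,A)$ to be unimodular, and unimodularity plays no role in the paper's computation; smoothness is needed so that Theorem 3.2 of Chapter 11 of \cite{gkz94} applies to $\widetilde{A}$, i.e., so that the massive GKZ vectors of regular triangulations of $(\columnA,\widetilde{A})$ actually describe the Newton polytope of $\hyperdiscriminantsurface$ --- a step you invoke without comment. Second, adding a generic \emph{affine} height in the vertical direction cannot refine the sub-prisms $\triangle\times I$ into tetrahedra, since an affine perturbation does not change the induced subdivision; one needs a genuinely generic perturbation of the individual lifted heights (the paper simply cites \cite{drs10} for the existence of a regular refinement). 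Neither point is fatal, but both should be corrected.
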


To compute the Hurwitz polytope exactly, we need to show that the convex hull of all the Hurwitz vectors contains the Hurwitz polytope.
A difficulty to show the converse is that the massive GKZ vectors of all regular triangulations $\columnAtriangulation$ of $(\columnA,\widetilde{A})$ do not necessarily provide the vertices of the Hurwitz polytope.
Hence, the proof should involve the problem: which regular triangulation $\columnAtriangulation$ of ($\columnA,\widetilde{A}$) provides a vertex of the Hurwitz polytope?
This is still a difficult problem even if $\dim\columnA=3$, because ($\columnA,\widetilde{A}$) admits too many triangulations.
Instead of the proof, we collect examples where the converse to Theorem \ref{thm:main_intro} holds in Section \ref{sec:examples}.

Although it is not enough to determine the Hurwitz polytope, Theorem \ref{thm:main_intro} provides the following degree formula of the Hurwitz form in terms of the volume of the polygon $Q$ and its boundary $\partial Q$.
\begin{corollary}\label{cor:degree}
		Let $A$ be a point configuration $A \subset \mathbb{Z}^{2}$.
	Let $\polytope$ be its convex hull.
	Assume that the associated toric surface $\toricvariety$ is smooth.
	Then the degree of the Hurwitz form of $\toricvariety$ in the Pl\"ucker coordinates is equal to
	\begin{equation}\label{eq:degree_hurwitz}
		3\vol(Q)-\vol(\partial Q).
	\end{equation}
	The volume $\vol(Q)$ and $\vol(\partial Q)$ are normalized so that the volume of the fundamental simplex is equal to one.
\end{corollary}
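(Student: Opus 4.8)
The plan is to read the degree off the Hurwitz polytope, which is exactly what Theorem~\ref{thm:main_intro} controls. The Hurwitz form $\mathrm{Hu}_{\toricvariety}$ is homogeneous in the Pl\"ucker coordinates of $\mathbb{G}(N-n,\mathbb{P}^N)$, so its weight polytope $\mathcal{W}(\mathrm{Hu}_{\toricvariety})$ lies in a single affine hyperplane: the diagonal one-parameter subgroup of $(\mathbb{C}^\times)^{N+1}$ scales every Pl\"ucker coordinate by the same weight, hence the total-weight functional $\ell(v)=\sum_{a\in A}v_a$ is constant on $\mathcal{W}(\mathrm{Hu}_{\toricvariety})$ and is proportional to $\deg\mathrm{Hu}_{\toricvariety}$. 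By Theorem~\ref{thm:main_intro} every Hurwitz vector $\mathrm{Hu}(T)$ lies in $\mathcal{W}(\mathrm{Hu}_{\toricvariety})$, hence on this hyperplane, so $\ell(\mathrm{Hu}(T))$ takes the same value for every regular triangulation $T$ of $(\polytope,A)$ and records the degree. Note that only the containment of Theorem~\ref{thm:main_intro}, not the reverse inclusion, is used here. The whole statement therefore reduces to computing $\ell(\mathrm{Hu}(T))$ for one convenient $T$ and identifying the proportionality constant.

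For the computation I would exploit smoothness to take $T$ to be a unimodular triangulation of $(\polytope,A)$; this exists because $A$ is the full set of lattice points of a lattice polygon, and then every simplex has normalized volume one and there are exactly $\vol(\polytope)$ of them. Evaluating Definition~\ref{def:hurwitzvector} on such a $T$ and summing over $A$, the interior lattice points contribute as they do in the Chow/GKZ vector, whose coordinate sum is $(n+1)\vol(\polytope)=3\vol(\polytope)$, whereas the boundary lattice points contribute with a deficit, because for the Segre configuration of $\toricvariety\times\mathbb{P}^1$ they meet fewer of the simplices carrying the massive part of the vector. Isolating this boundary deficit is the crux: the expectation is that it totals exactly $\vol(\partial\polytope)$ and is independent of the chosen unimodular triangulation, giving $\ell(\mathrm{Hu}(T))=3\vol(\polytope)-\vol(\partial\polytope)$.

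To certify both the boundary deficit and the proportionality constant I would recompute the number intrinsically, using the identification of $\mathrm{Hu}_{\toricvariety}$ with the discriminant $\hyperdiscriminantsurface$ of the Segre product recalled in the introduction. Since the $(N-n)$-plane is cut out by $n=2$ linear forms, the $\mathrm{SL}_2$-descent from the discriminant in the $(N+1)\times 2$ matrix entries to the Pl\"ucker coordinates divides the degree by $2$, so $\deg\mathrm{Hu}_{\toricvariety}=\tfrac12\deg(\toricvariety\times\mathbb{P}^1)^{\vee}$. Applying the classical Chern-class formula for the degree of a dual variety to $\toricvariety\times\mathbb{P}^1$ with the Segre polarization $H_Y=\mathrm{pr}_1^{*}H+\mathrm{pr}_2^{*}\mathcal{O}_{\mathbb{P}^1}(1)$, expanding $c(T_{\toricvariety\times\mathbb{P}^1})=c(T_{\toricvariety})(1+2\,\mathrm{pr}_2^{*}\mathcal{O}_{\mathbb{P}^1}(1))$ and using $\mathcal{O}_{\mathbb{P}^1}(1)^{2}=0$, the Euler-characteristic term $c_2(\toricvariety)$ cancels and leaves $\deg(\toricvariety\times\mathbb{P}^1)^{\vee}=6\deg\toricvariety-2(-K_{\toricvariety}\cdot H)$. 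Halving this and inserting the toric dictionary $\vol(\polytope)=\deg\toricvariety$ and $\vol(\partial\polytope)=\sum_{\text{edges}}(\text{lattice length})=-K_{\toricvariety}\cdot H$ reproduces \eqref{eq:degree_hurwitz}, in agreement with the combinatorial count. The main obstacle is the boundary-deficit bookkeeping in the combinatorial step; the dual-variety computation both predicts the deficit and fixes the normalization, so the remaining work is to match the two.
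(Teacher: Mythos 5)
Your opening paragraph is exactly the paper's strategy: by \eqref{eq:degree_plucker} each Pl\"ucker coordinate has total weight $n=2$, so the coordinate-sum functional $\ell$ equals $2\deg\hurwitz$ on the whole weight polytope, and by Theorem \ref{thm:main_intro} it therefore suffices to evaluate $\tfrac12\ell(\xi_T)$ for a single regular triangulation $T$. The gap is that you stop exactly at the computation that constitutes the proof: you restrict to a unimodular $T$ and then record the boundary contribution only as an ``expectation,'' flagging the bookkeeping as ``the main obstacle.'' In fact no unimodularity is needed and the bookkeeping is a one-line double count, valid for \emph{any} triangulation $T$ of $(\polytope,A)$: in $\sum_i\eta_{T,2}(\omega_i)$ every $2$-simplex of $\Sigma_T(2)$ is counted once for each of its three vertices, giving $3\vol(\polytope)$, and in $\sum_i\eta_{T,1}(\omega_i)$ every massive edge (the $1$-simplices lying on $\partial\polytope$) is counted once for each of its two endpoints, giving $2\vol(\partial\polytope)$. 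Since $\xi_T=2\eta_{T,2}-\eta_{T,1}$, this yields $\ell(\xi_T)=6\vol(\polytope)-2\vol(\partial\polytope)$, and dividing by the normalization constant $2$ gives \eqref{eq:degree_hurwitz}. Your phrasing ``interior points contribute as in the GKZ vector, boundary points contribute a deficit'' conflates the two terms of $\xi_T$; the clean split is by the two summands $2\eta_{T,2}$ and $-\eta_{T,1}$, not by the location of the lattice point.

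Your second, Chern-class route is not needed and is itself only a sketch: the claim that the class formula for $(\toricvariety\times\mathbb{P}^1)^\vee$ collapses to $6\deg\toricvariety-2(-K_{\toricvariety}\cdot H)$ with the $c_2$ term cancelling is precisely Paul's formula (5.53), which the paper records in Remark \ref{rem:degree} as an \emph{equivalent} statement (including the factor $2$ between Stiefel and Pl\"ucker degrees), not as a proof. If you carry that computation out carefully it does give an independent derivation, but as written it replaces one unverified step by another. The fix is simply to replace the ``expected deficit'' by the double-counting identities above.
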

\noindent
Remark that the equality (\ref{eq:degree_hurwitz})  is equivalent to the formula (5.53) in \cite{paul12}  when $\toricvariety$ is a smooth toric surface (see Remark \ref{rem:degree}).

The organization of this note is as follows.
In Section \ref{sec:mainresult}, we recall the terminologies we use in this note.
We refer to the book \cite{gkz94} for most of them.
In particular, we define the Hurwitz vector.
In Section \ref{sec:proof}, we give the proof to Theorem \ref{thm:main_intro}.
As a corollary of Theorem \ref{thm:main_intro}, we compute the degree of the Hurwitz form.
In Section \ref{sec:converse}, we discuss an argument towards the converse to Theorem \ref{thm:main_intro}.
In Section \ref{sec:examples}, we confirm that the converse to Theorem \ref{thm:main_intro} (Conjecture \ref{conj:main}) is true for some examples by computer.
In Section \ref{sec:k-stability}, we discuss relations between the Hurwitz vectors and the stabilities defined in \cite{paul12} and \cite{donaldson02}.

\section{Preliminaries}\label{sec:mainresult}
\subsection{Hurwitz form}

Let $\variety$ be an $n$-dimensional closed irreducible variety in  $\projectivespace$.
Through this note, we assume that $\variety$ is linearly normal and that both its degree $\degree$ and the codimension of the singular locus of $\variety$ are greater or equal to two.
The subvariety
$$
	\{L\in \mathbb{G}(N-n, \mathbb{P}^N)
	\mid\,
	\sharp (L\cap X) < \degree
	\}
$$
has codimension one (Theorem 1.1 \cite{sturmfels17}).
We call its defining polynomial the \textit{Hurwitz form} of $X$.
We denote it by $\hurwitz$.
This is an irreducible element in the coordinate ring of $\mathbb{G}(N-n, \mathbb{P}^N)$.

\subsection{Hyperdiscriminant}

\textit{The dual variety} $\dualvariety$ is the subvariety of $(\projectivespace)^\vee$ consisting of the hyperplanes tangent to $\variety$, i.e., the Zariski closure of 
$$
	\{H\in(\projectivespace)^\vee \mid\, \mathbb{T}_p \variety\subset H \,\, \mbox{for some regular point } p\in X\}.
$$
Here $\mathbb{T}_p \variety \subset\projectivespace$ denotes the embedded tangent space of $\variety\subset \projectivespace$.
If $\dualvariety$ has the codimension one, then we call its defining polynomial \textit{the discriminant} of $\variety$.
In general, however, the codimension of $\dualvariety$ may not be equal to one. 

If $\variety$ is non-degenerate and the codimension of the singular part of $\variety$ is greater or equal to two, then Theorem 2.3 in \cite{zak93} and an application of Katz dimension formula (cf. Corollary 5.9 in Chapter 1 \cite{gkz94}) imply that the dual variety of the Segre embedding of $\segreembedding$ into $\mathbb{P}^{(N+1)n-1}$ is always a hypersurface in $(\mathbb{P}^{(N+1)n-1})^\vee$.
Its defining polynomial is called \textit{the hyperdiscriminant} of $\variety$ in \cite{paul12}.
We denote it by $\hyperdiscriminant$.

Writing $\hurwitz$ in the Stiefel coordinates, Sturmfels \cite{sturmfels17} proves that $\hurwitz$ is equal to $\hyperdiscriminant$. 
We also refer to Section 3.2E, Chapter 3 in \cite{gkz94} and Proposition 5 in \cite{kohn21} for the proof.
This coincidence can be seen as a variant of the Cayley trick (cf. Corollary 2.8 in Chapter 2 \cite{gkz94}).
Regarding $\hurwitz$ as $\hyperdiscriminant$, we are able to apply GKZ theory to it directly.
This will be discussed in Section \ref{sec:proof}.

\subsection{Hurwitz polytope}
We refer to \cite{sturmfels17} for this subsection.
Let us consider the natural action on $\projectivespace$ of the $(N+1)$-dimensional torus $\torus$ represented by
$$
	\begin{pmatrix}
		t_{1} &  &  \\
		 & \ddots & \\
		 && t_{N+1}
	\end{pmatrix}
	\in \torus \subset \GL.
$$
This is extended to the action on $\mathbb{G}(N-n,N)$ in a natural way.
This action induces the $\mathbb{Z}^{N+1}$-grading of the Pl\"ucker coordinate  ring of $\mathbb{G}(N-n,N)$ by
\begin{equation}\label{eq:degree_plucker}
	\mathrm{deg}(p_{i_1\cdots i_n})
	= \sum_{j=1}^n \mathbf{e}^{(N+1)}_{i_j}
	\in \mathbb{Z}^{N+1},
\end{equation}
where $1\le i_{1},\ldots, i_{n}\le N+1$ and $\mathbf{e}^{(N+1)}_i$ ($1\le i \le N+1$) denote the standard basis of $\mathbb{R}^{N+1}$.
We call the weight polytope of $\hurwitz$ with respect to this action \textit{the Hurwitz polytope} denoted by $\hurpolytope$.
This is the object we desire to compute.

On the other hand, the weight polytope of $\hyperdiscriminant$ is defined in the following way. 
The torus action on $\projectivespace$ is extended to the action on $\projectivespace\times \mathbb{P}^{n-1}$ by acting on the first factor in a natural way and on the second factor trivially.
Then, the weight decomposition of a polynomial on $(\mathbb{P}^{(N+1)n-1})^\vee$ with respect to the action on $\projectivespace\times \mathbb{P}^{n-1}$  is equivalent to the $\mathbb{Z}^{N+1}$-grading of the Pl\"ucker coordinate  ring of $\mathbb{G}(N-n,N)$ defined by (\ref{eq:degree_plucker}).
Hence, the weight polytope $\hyperdiscriminantpolytope$ coincides with $\hurpolytope$.

Remark that the torus that appeared here is different from the one considered in \cite{paul12}. 
The weight polytope in \cite{paul12} is defined with respect to the action of $N$-dimensional torus in $\mathrm{SL}(N+1,\mathbb{C})$ (see the proof of Corollary \ref{cor:ksemistability}).

\subsection{Triangulations of Point Configurations}
Let 
$$
	A=\{\omega_1,\ldots, \omega_{N+1}\}\subset \mathbb{Z}^n
$$
be a lattice point configuration on $\mathbb{R}^{n}$.
Assume that the dimension of the convex hull $\polytope$ of $A$ is equal to $n$.
For a point configuration $A$, we define an $n$-dimensional projective toric variety $\toricvariety\subset\projectivespace$ by the Zariski closure of 
$$
	\bigg\{
	(t^{\omega_1}:\ldots:t^{\omega_{N+1}})\in
	\projectivespace
	\bigg|\,\,
	t=(t_1,\ldots,t_{n})\in (\mathbb{C}^{\times})^n
	\bigg\} 
$$
where $\omega_i=(\omega_{i1},\ldots, \omega_{in})\in \mathbb{Z}^{n}$ and $t^{\omega_i}=\prod_{j=1}^n t_j^{\omega_{ij}} $.

Let $T$ be \textit{a triangulation} of $(Q,A)$, i.e., a collection $\Sigma_T$ of simplices whose vertices in $A$ such that the support $|\Sigma_T|=Q$ and any intersection of two simplices in $\Sigma_T$ is contained in $\Sigma_T$, and any face of the intersection is also in $\Sigma_T$.
We denote the set of $k$-dimensional simplices in $\Sigma_T$ by $\Sigma_T(k)$.
We say that $T$ (as a subdivision) is \textit{regular} if it is obtained by the vertical projection of the lower convex hull of the lifted point configuration 
$$
	A^g:= \{(\omega_1,g(\omega_1)),\ldots,(\omega_{N+1},g(\omega_{N+1}))\}\subset \mathbb{R}^{n+1}
$$
by some concave piecewise-linear function $g:Q\to\mathbb{R}$, i.e., $g$ is linear on every simplex in $\Sigma_T(n)$.

\subsection{GKZ vector}
For $\sigma\in\Sigma_T(k)$, we say that $\sigma$ is \textit{massive} if $\sigma$ is contained in some $k$-dimensional face of $Q$.
Remark that we define that any maximal simplex is massive.
For a given triangulation $T$, the following vector is called \textit{the massive GKZ vector} ((3.2) in Chapter 11 \cite{gkz94}):
\begin{equation}\label{eq:massiveGKZ}
	\eta_T:= \sum_{k=0}^{n}(-1)^{n-k}\eta_{T,k}\in \mathbb{Z}^{N+1} 	
\end{equation}
where
$$
\eta_{T,k}= (\eta_{T,k}(\omega_1),\ldots,\eta_{T,k}(\omega_{N+1}))
$$
and
\begin{equation}\label{eq:gkzvector}
	\eta_{T,k}(\omega_i)= \sum_{\omega_i\prec\sigma}\vol(\sigma).	
\end{equation}
In (\ref{eq:gkzvector}), a simplex $\sigma$ runs through all the massive simplices in $\Sigma_T(k)$ containing $\omega_i$ as a vertex.
Remark that the volume $\vol(\sigma)$ is normalized so that the volume of the fundamental simplex is equal to one.

In particular, the vector $\eta_{T,n}\in\mathbb{Z}^{N+1}$ is called \textit{the GKZ vector} of $T$ ((1.4) in Chapter 7 in \cite{gkz94}), whose $i$-th coordinate is obtained as the sum of the volume of all maximal simplices containing the lattice point $\omega_i$.

\subsection{Secondary Polytopes}
We call the convex hull of the GKZ vectors $\eta_{T,n}$ for all triangulations $T$ of $(Q,A)$ \textit{the secondary polytope} of $A$.
We denote it by $\secondarypoly$.
By \cite{ksz92} (also see Theorem 3.1 in Chapter 8 \cite{gkz94}), the polytope $\secondarypoly$ coincides with the Chow polytope of $\toricvariety$, which is the weight polytope of the Chow form of $\toricvariety$ with respect to the action of $\torus$.
The vertices of the secondary polytope $\secondarypoly$ are in one-to-one correspondence with the regular triangulations of $(Q,A)$.
In particular, $\eta_{T,n}= \eta_{T’,n}$ if and only if $T=T’$.
We refer to Chapter 7 in \cite{gkz94} for the properties of the secondary polytopes.

\subsection{Discriminant Polytopes}
The massive GKZ vectors are \textit{not} in one-to-one correspondence with the regular triangulations of $(Q,A)$.
We say that a triangulation $T$ is \textit{$D$-equivalent} to another triangulation $T’$ if $\eta_{T}=\eta_{T’}$.
Then the following holds.
\begin{theorem}[Theorem 3.2 in Chapter 11 \cite{gkz94}]\label{thm:discriminant}
The vertices of the Newton polytope of the discriminant (if it exists) of a smooth toric variety $\toricvariety$ correspond exactly to the massive GKZ vectors of the $D$-equivalent classes of the regular triangulations of $(Q,A)$
\end{theorem}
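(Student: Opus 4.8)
The plan is to exhibit the Newton polytope of the discriminant as a Minkowski summand of the secondary polytope and then isolate the relevant supporting vertex by Möbius inversion over the face lattice of $\polytope$. Write $N(\cdot)$ for the Newton polytope of a polynomial. The engine consists of two standard facts about the principal $A$-determinant $E_A$: its Newton polytope is the secondary polytope, $N(E_A)=\mathrm{SecPoly}(A)$, whose vertices are the GKZ vectors $\eta_{T,n}$ of the regular triangulations $T$ of $(\polytope,A)$; and the prime factorization $E_A=\pm\prod_{\Gamma\preceq\polytope}\Delta_{A\cap\Gamma}^{\,i(A,\Gamma)}$ over the nonempty faces $\Gamma$ of $\polytope$, in which smoothness of $\toricvariety$ forces every multiplicity $i(A,\Gamma)$ to be $1$. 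Since the Newton polytope of a product is the Minkowski sum of the Newton polytopes, applying these facts to each face configuration $A\cap\Gamma$ (which is again smooth) yields, for every face $\Gamma\preceq\polytope$,
\[
	\mathrm{SecPoly}(A\cap\Gamma)=\sum_{\Gamma'\preceq\Gamma}N\bigl(\Delta_{A\cap\Gamma'}\bigr).
\]

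Next I would fix a concave piecewise-linear height function $g$ defining a regular triangulation $T$. Because $\mathrm{SecPoly}(A)$ is itself the Minkowski sum of the summands $N(\Delta_{A\cap\Gamma'})$, its normal fan refines the normal fan of each summand; hence a direction $g$ interior to a maximal cone of the secondary fan automatically selects a unique vertex on every summand at once. Taking the $g$-supporting vertex of both sides of the displayed identity and using that the support vertex of a Minkowski sum is the sum of the support vertices of the summands, I obtain
\[
	G(T,\Gamma)=\sum_{\Gamma'\preceq\Gamma}v_{\Gamma'}(g),
\]
where $G(T,\Gamma)$ denotes the GKZ vector of the induced regular triangulation $T|_{\Gamma}$ (the supporting vertex of $\mathrm{SecPoly}(A\cap\Gamma)$) and $v_{\Gamma'}(g)$ is the supporting vertex of $N(\Delta_{A\cap\Gamma'})$. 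The bookkeeping identity $\sum_{\dim\Gamma'=k}G(T,\Gamma')=\eta_{T,k}$ then follows directly from the definition \eqref{eq:gkzvector}, since a massive $k$-simplex of $T$ is precisely a top-dimensional simplex of $T|_{\Gamma'}$ for a unique $k$-face $\Gamma'$.

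Finally I would invert the face-lattice relation. The face lattice of a polytope is Eulerian, so its Möbius function satisfies $\mu(\Gamma',\Gamma)=(-1)^{\dim\Gamma-\dim\Gamma'}$; Möbius inversion applied at $\Gamma=\polytope$ gives
\[
	v_{\polytope}(g)=\sum_{\Gamma'\preceq\polytope}(-1)^{\,n-\dim\Gamma'}G(T,\Gamma')=\sum_{k=0}^{n}(-1)^{n-k}\eta_{T,k}=\eta_T,
\]
so the supporting vertex of $N(\Delta_A)$ in the direction $g$ is exactly the massive GKZ vector of $T$. As $g$ ranges over generic directions --- equivalently, as $T$ ranges over the regular triangulations --- this exhausts the vertices of $N(\Delta_A)$, and two triangulations produce the same vertex precisely when they are $D$-equivalent, which is the claimed correspondence.

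The step I expect to be the main obstacle is establishing that smoothness makes all the multiplicities $i(A,\Gamma)$ equal to $1$ uniformly over every face of every face configuration: without this the Minkowski decomposition acquires integer weights and the clean alternating sum defining $\eta_T$ no longer emerges from the Möbius inversion. A secondary technical point is confirming the compatibility of supporting directions, so that the ``support vertex of a sum equals the sum of support vertices'' identity may be applied to all summands simultaneously; this is handled by the normal-fan refinement remark above.
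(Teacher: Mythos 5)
This theorem is quoted in the paper directly from Gelfand--Kapranov--Zelevinsky (Theorem 3.2, Chapter 11) and is not proved there, so there is no in-paper argument to compare against; your proposal is, in outline, a correct reconstruction of GKZ's own proof. The three pillars you use --- $N(E_A)=\secondarypoly$, the prime factorization of the principal $A$-determinant over the faces of $\polytope$ with all multiplicities equal to $1$ in the smooth case, and the recovery of the top summand's supporting vertex by the alternating sum over the face lattice --- are exactly the ingredients of the original argument, and your bookkeeping identity $\sum_{\dim\Gamma'=k}G(T,\Gamma')=\eta_{T,k}$ correctly matches the paper's definition of massive simplices, since a massive $k$-simplex lies in a unique $k$-face.
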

Remark that the Newton polytope of the $\toricvariety$-discriminant is equal to its weight polytope with respect to the natural action of $\torus$ on $(\projectivespace)^\vee$.

\subsection{Hurwitz vector}
Regarding $\hurwitz$ as the hyperdiscriminant, we can compute its weight polytope by GKZ theory.
By Theorem \ref{thm:discriminant}, $\hyperdiscriminantpolytope$ can be calculated from the massive GKZ vectors of all  regular triangulations of not $A$ but 
$$
	\widetilde{A}=
	\{
	(\omega_i, \mathbf{e}^{(n-1)}_j)\in\mathbb{R}^{n}\times\mathbb{R}^{n-1}
	\mid\,
	\omega_i\in A, 0\le j \le n-1
	\}.
$$
The set $\widetilde{A}$ is the cartesian product of $A$ and the vertices of the $(n-1)$-dimensional unit simplex.
In above, $\mathbf{e}^{(n-1)}_j$ $(1\le i \le n-1)$ denote the standard unit vectors of $\mathbb{R}^{n-1}$ and $\mathbf{e}^{(n-1)}_0= \mathbf{o}$.
A difficulty in this computation is that the number of all regular triangulations of $(\columnA, \widetilde{A})$ is quite larger than the one of $(Q,A)$.

As mentioned in Introduction, we propose the following question.
\begin{question}\label{que:hurwitz}
	Can we obtain the Hurwitz polytope $\hurpolytope$ from the regular triangulations of $(Q,A)$ instead of $(\columnA, \widetilde{A})$ directly?
\end{question}
\noindent
To consider the above question, let us recall the followings:
\begin{itemize}
	\item 
		The Chow polytope of $\toricvariety$ coincides with the secondary polytope $\secondarypoly$ whose vertices are given by the GKZ vectors $\eta_{T,n}$.
	\item
		The vertices of the discriminant polytope of $\toricvariety$ are given by the massive GKZ vectors defined by (\ref{eq:massiveGKZ}).
	\item
		The Hurwitz form is an intermediate between the Chow form and the discriminant.
\end{itemize}
From the above, we introduce an intermediate vector between $\eta_{T,n}$ and $\eta_T$ as follows.
\begin{definition}\label{def:hurwitzvector}
	Let $A$ be a point configuration on $\mathbb{Z}^n$.
	For a (not necessarily regular) triangulation $T$ of $(Q,A)$, we define
	$$
		\xi_{T}:= n\eta_{T,n} -\eta_{T,n-1}.
	$$
	We call it \textit{the Hurwitz vector} with respect to $T$.
\end{definition}

\section{Proof}\label{sec:proof}

\subsection{Proof of Theorem \ref{thm:main_intro}}
Through this section, we let $n=2$.
In addition, we assume that $\toricvariety$ is smooth to apply Theorem \ref{thm:discriminant}.
We prove Theorem \ref{thm:main_intro} in the following two steps:
\begin{enumerate}
	\item 
		construct some regular triangulation $\columnAtriangulation$ of $(\columnA,\widetilde{A})$ for a given regular triangulation $T$ of $(Q,A)$, then
	\item
		verify that $\xi_T$ is equal to the associated point $\nu_{\widetilde{T}}$ of $\hyperdiscriminantpolytopesurface$ with respect to the above triangulation $\columnAtriangulation$.				
\end{enumerate}

Let us see how the vertices of $\hyperdiscriminantpolytopesurface$ are obtained from the massive GKZ vectors of a regular triangulation $\columnAtriangulation$ of $(\columnA,\widetilde{A})$.
Let 
$$
	A=
	\{
		\omega_1,\ldots,\omega_{N+1}
	\}
	\subset \mathbb{Z}^2.
$$ 
For each $\omega_i\in A$, we associate the following two points
$$
	\pointupper_i:=(\omega_i,0),\quad
	\pointlower_i:=(\omega_i,1)
$$
in $\mathbb{Z}^3$.
Let
$$
	\widetilde{A}
	=\{\pointlower_i\mid\, 1\le i\le N+1\}
	\cup 
	\{\pointupper_i\mid\, 1\le i\le N+1\}
	\subset \mathbb{Z}^3.
$$
Let $\columnA$ be the prism $Q\times I$ in $\mathbb{R}^3$, which is equal to the convex hull of $\widetilde{A}$.
For a triangulation $\columnAtriangulation$ of $(\columnA, \widetilde{A})$, we define the vector $\nu_{\columnAtriangulation}$ by
\begin{equation*}\label{eq:vertices_from_mGKZ}
	\nu_{\columnAtriangulation}
	=
	(\nu_{\columnAtriangulation, 1},
	\ldots,
	\nu_{\columnAtriangulation, N+1}
	)
	\in\mathbb{Z}^{N+1}
\end{equation*}
where
\begin{eqnarray*}
		\nu_{\columnAtriangulation, i}
	&=&
		\eta_{\columnAtriangulation}(\pointupper_i)
		+
		\eta_{\columnAtriangulation}(\pointlower_i)
	\\
	&=& 
		\sum_{j=0}^{3}
		(-1)^j
		\big(
		\eta_{\columnAtriangulation,j}(\pointupper_i)
		+
		\eta_{\columnAtriangulation,j}(\pointlower_i)
		\big).	
\end{eqnarray*}
\begin{proposition}\label{prop:vertices_nuvector}
Any vertex of $\hyperdiscriminantpolytopesurface$ is equal to the vector $\nu_{\columnAtriangulation}$ for some regular triangulation $\columnAtriangulation$ of $(\columnA, \widetilde{A})$.
In particular, $\hyperdiscriminantpolytopesurface$ is the convex hull of $\nu_{\columnAtriangulation}$ of all regular triangulations $\columnAtriangulation$.
\end{proposition}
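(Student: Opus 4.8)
The plan is to exhibit $\hyperdiscriminantpolytopesurface$ as the image of the full discriminant polytope of the product configuration $\widetilde{A}$ under an explicit linear projection, and then to read off both assertions from Theorem \ref{thm:discriminant} together with elementary convex geometry.

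First I would isolate the two torus actions at play. Since $\widetilde{A}$ consists of the $2(N+1)$ points $\pointupper_i,\pointlower_i$, the hyperdiscriminant $\hyperdiscriminantsurface$ is a polynomial in the $2(N+1)$ coefficients indexed by $\widetilde{A}$, and its Newton polytope $P\subset\mathbb{R}^{2(N+1)}$ is exactly the weight polytope of $\hyperdiscriminantsurface$ with respect to the full torus $(\mathbb{C}^\times)^{2(N+1)}$ acting on the homogeneous coordinates of $\mathbb{P}^{2N+1}$. By definition, however, $\hyperdiscriminantpolytopesurface$ is the weight polytope with respect to the $(N+1)$-dimensional subtorus that acts on the $\projectivespace$-factor in the natural way and trivially on the $\mathbb{P}^1$-factor. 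Under the Segre embedding the coordinate attached to either $\pointupper_i$ or $\pointlower_i$ receives the single weight $\mathbf{e}^{(N+1)}_i$ from this subtorus, so restricting weights from the full torus to the subtorus is the linear map $\pi:\mathbb{R}^{2(N+1)}\to\mathbb{R}^{N+1}$ summing the two levels over each point, $\pi(v)_i=v(\pointupper_i)+v(\pointlower_i)$. Hence $\hyperdiscriminantpolytopesurface=\pi(P)$; this is precisely the identification, already recorded in the preliminaries, between the weight decomposition on $\projectivespace\times\mathbb{P}^1$ and the $\mathbb{Z}^{N+1}$-grading (\ref{eq:degree_plucker}).

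Next I would invoke Theorem \ref{thm:discriminant}. As $\toricvariety$ is smooth and $n=2$, the product $\toricvariety\times\mathbb{P}^1$ is a smooth projective toric variety, so the discriminant $\hyperdiscriminantsurface$ exists as a hypersurface and Theorem \ref{thm:discriminant} yields $P=\conv\{\eta_{\columnAtriangulation}\mid\columnAtriangulation\text{ a regular triangulation of }(\columnA,\widetilde{A})\}$, the vertices of $P$ being the massive GKZ vectors of the $D$-equivalence classes. Comparing the definition of $\nu_{\columnAtriangulation}$ with (\ref{eq:massiveGKZ}) gives at once $\nu_{\columnAtriangulation}=\pi(\eta_{\columnAtriangulation})$, whence
$$
\hyperdiscriminantpolytopesurface=\pi(P)=\conv\{\pi(\eta_{\columnAtriangulation})\}=\conv\{\nu_{\columnAtriangulation}\},
$$
which is the ``in particular'' statement. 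For the first statement I would use the standard fact that any vertex $v$ of an image polytope $\pi(P)$ is attained on the nonempty face $\pi^{-1}(v)\cap P$ of $P$, hence on at least one vertex $\eta_{\columnAtriangulation}$ of $P$; then $v=\pi(\eta_{\columnAtriangulation})=\nu_{\columnAtriangulation}$ for some regular $\columnAtriangulation$.

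The delicate points are bookkeeping rather than conceptual: checking that the subtorus-weight of each Segre coordinate is exactly $\mathbf{e}^{(N+1)}_i$, so that $\pi$ is the summation map matching the definition of $\nu_{\columnAtriangulation}$; and verifying the hypotheses of Theorem \ref{thm:discriminant}, namely smoothness of $\toricvariety\times\mathbb{P}^1$ and existence of the discriminant as a hypersurface, the latter being the Zak/Katz argument quoted in the preliminaries. Once $\hyperdiscriminantpolytopesurface=\pi(P)$ is in hand the remaining steps are elementary, so I do not expect a substantive obstacle here; the genuine work lies in the subsequent identification of $\nu_{\columnAtriangulation}$ with the Hurwitz vector $\xi_T$.
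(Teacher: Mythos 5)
Your proposal is correct and follows essentially the same route as the paper's proof: both identify $\hyperdiscriminantpolytopesurface$ as the image of the Newton polytope of $\hyperdiscriminantsurface$ under the projection $\pi$ summing the $\pointupper_i$- and $\pointlower_i$-coordinates, note $\nu_{\columnAtriangulation}=\pi(\eta_{\columnAtriangulation})$, and conclude via the fact that every vertex of the image polytope is attained at a vertex of the source. Your phrasing of that last step (the preimage of a vertex is a nonempty face, hence contains a vertex) is if anything slightly cleaner than the paper's.
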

\begin{proof}
Recall that $\sum_{j}(-1)^{j}\eta_{\columnAtriangulation,j}(\pointupper_i)$ and $\sum_{j}(-1)^{j}\eta_{\columnAtriangulation,j}(\pointlower_i)$ are the vertices of the Newton polytope $\mathcal{D}(\hyperdiscriminantsurface)$ of $\hyperdiscriminantsurface$ when we regard $\hyperdiscriminantsurface$ as a polynomial on $(\mathbb{P}^{2(N+1)-1})^{\vee}$.
They are also equal to the vertices of the weight polytope of $\hyperdiscriminantsurface$ with respect to the natural action of $(\mathbb{C}^{\times})^{2(N+1)}$.
We label the elements in $\widetilde{A}$ by
$$
	(\pointupper_{1},\,
	\ldots,\,
	\pointupper_{N+1},\,
	\pointlower_{1},\,
	\ldots,
	\pointlower_{N+1}).
$$
Let the projection $\pi:\mathbb{R}^{2(N+1)}\to \mathbb{R}^{N+1}$ be
$$
	(
	x^{+}_{1},\, \ldots, x^{+}_{N+1},\,
	x^{-}_{1},\, \ldots, x^{-}_{N+1}
	)
	\mapsto
	(
	x^{+}_{1}+x^{-}_{1},\, \ldots, x^{+}_{N+1}+x^{-}_{N+1}
	).
$$
Since the torus $\torus$ act trivially on the second factor of $\projectivespace\times\mathbb{P}^{1}$, the weight can be written by
$$
	t_{1}^{\eta_{\columnAtriangulation}(\pointupper_1)}
	\cdots
	t_{N+1}^{\eta_{\columnAtriangulation}(\pointupper_{N+1})}
	t_{1}^{\eta_{\columnAtriangulation}(\pointlower_1)}
	\cdots
	t_{N+1}^{\eta_{\columnAtriangulation}(\pointlower_{N+1})}
	=
	t^{\nu_{\columnAtriangulation}}
$$
where $t=(t_{1},\ldots, t_{N+1})\in (\mathbb{C}^{\times})^{N+1}$.
This implies that 
$$
	\pi (\mathcal{D}(\hyperdiscriminantsurface))
	=\hyperdiscriminantpolytopesurface.
$$
Since the inverse image under $\pi$ is an affine subspace of dimension $(N+1)$ in $\mathbb{R}^{2(N+1)}$, the inverse image $\pi^{-1}(v)$ for any vertex $v$ of $\hyperdiscriminantpolytopesurface$ contains some vertex of $\mathcal{D}(\hyperdiscriminantsurface)$.
This means that there exists some regular triangulation $\widetilde{T}$ of $(\columnA, \widetilde{A})$ such that $v= \nu_{\widetilde{T}}$.
The proof is completed.
\end{proof}
Notice that a regular triangulation $\widetilde{T}$ of $(\columnA, \widetilde{A})$ whose $\nu_{\widetilde{T}}$ is a vertex of $\hyperdiscriminantpolytopesurface$ may not be unique.
Following the $D$-equivalence, we introduce the following equivalence among the regular triangulations of $(\columnA, \widetilde{A})$.
\begin{definition}
	A regular triangulation $\columnAtriangulation$ of $(\columnA,\widetilde{A})$ is $H$-equivalent to another regular triangulation $\columnAtriangulation’$ if $\nu_{\columnAtriangulation}=\nu_{\columnAtriangulation’}$.
\end{definition}
\noindent
The $D$-equivalence implies the $H$-equivalence.

Now, we construct a regular triangulation of $(\columnA,\widetilde{A})$ for each regular triangulation $T$ of $(Q,A)$.
First, we subdivide $\columnA$ into $\triangle\times I$ with respect to each simplex $\triangle \in\Sigma_T(2)$, i.e.,
\begin{equation*}
	\columnA
	= 
	\bigcup_{\triangle\in\Sigma_T(2)}
	\triangle\times I.
\end{equation*}
By construction, this subdivision of $\columnA$ is regular, because $T$ is regular.
Next, the subdivision can be refined to a regular triangulation $\columnAtriangulation$ of $(\columnA,\widetilde{A})$ (see \cite{drs10}).
We notice that $\columnAtriangulation$ subdivides both of the upper facet $Q\times \{1\}$ and the lower facet $Q\times \{0\}$ by the same triangulation $T$.

\begin{definition}
	For a regular triangulation $A$ of $(Q,A)$, we call the regular triangulation $\columnAtriangulation$ of $(\columnA, \widetilde{A})$ as above \textit{the vertical regular triangulation} associated with $T$.
\end{definition}
\noindent
Remark that the vertical regular triangulation $\columnAtriangulation$ may not be unique with respect to a given regular triangulation $T$.

\begin{lemma}\label{lem:simplex_hurwitz}
	Suppose that $A\subset\mathbb{Z}^{2}$ consists of three points $\{\omega_1,\,\omega_2,\omega_3\}$ constituting a two dimensional simplex $Q=\triangle$.
	Let $\columnAtriangulation$ be a regular triangulation of the triangular prism $(\columnA, \widetilde{A})$.
	Then, we have
	\begin{equation}\label{eq:simplex_volume_prism}
			\eta_{\columnAtriangulation,3}(\pointupper_i)+\eta_{\columnAtriangulation,3}(\pointlower_i)
		=
			4 \vol(\triangle).		
	\end{equation}
	In particular, the left hand is independent of the choice of the triangulation $\columnAtriangulation$.
\end{lemma}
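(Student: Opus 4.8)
The plan is to evaluate one Euclidean integral in two different ways. Fix the index $i$ and introduce the affine function $\ell_i$ on $\mathbb{R}^3$ obtained by composing the vertical projection $\pi\colon\columnA=Q\times I\to Q$ with the $i$-th barycentric coordinate $\lambda_i$ of $\triangle$; that is, $\ell_i=\lambda_i\circ\pi$. This $\ell_i$ is affine, it equals $1$ at the two lifted vertices $\pointupper_i$ and $\pointlower_i$, and it vanishes at the remaining four vertices of $\widetilde{A}$. The whole point of the proof is that this single function simultaneously linearizes the two weighted sums appearing on the left-hand side of \eqref{eq:simplex_volume_prism}.

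First I would compute $\int_{\columnA}\ell_i\,dx$ (with respect to the Euclidean volume element) by Fubini over the product $Q\times I$. Since $\ell_i$ is constant along the fibres of $\pi$ and the vertical factor has unit length, this reduces to $\int_Q\lambda_i\,dA$, which equals $\tfrac13\,\mathrm{Area}(Q)$ because each barycentric coordinate has average $\tfrac13$ over a triangle (the three coordinates are symmetric and sum to $1$). Rewriting the Euclidean area of $Q$ in terms of normalized volume via $\mathrm{Area}(Q)=\tfrac12\vol(\triangle)$ gives the value $\tfrac16\vol(\triangle)$.

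Second I would compute the same integral through the triangulation $\columnAtriangulation$, using additivity together with the mean-value property of affine functions on a simplex: on a tetrahedron $\sigma$ the integral of $\ell_i$ equals the average of $\ell_i$ over the four vertices of $\sigma$ times its Euclidean volume. Because $\ell_i$ is the indicator of $\{\pointupper_i,\pointlower_i\}$ on the vertex set $\widetilde{A}$, this average is $\tfrac14$ of the number of vertices of $\sigma$ lying in $\{\pointupper_i,\pointlower_i\}$. Summing over all maximal simplices, and recording the normalization $\vol=3!\,\mathrm{vol}_E$ in dimension three, the weighted vertex count reassembles exactly $\eta_{\columnAtriangulation,3}(\pointupper_i)+\eta_{\columnAtriangulation,3}(\pointlower_i)$, so the integral equals $\tfrac1{24}\bigl(\eta_{\columnAtriangulation,3}(\pointupper_i)+\eta_{\columnAtriangulation,3}(\pointlower_i)\bigr)$.

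Equating the two evaluations yields $\eta_{\columnAtriangulation,3}(\pointupper_i)+\eta_{\columnAtriangulation,3}(\pointlower_i)=4\vol(\triangle)$, and since neither evaluation referred to the combinatorics of $\columnAtriangulation$, the identity holds for every triangulation (indeed regularity is never used), which is the ``in particular'' clause. I expect the only delicate point to be the bookkeeping of the two normalization constants — the factor $2!$ relating Euclidean area to $\vol(\triangle)$ in dimension two, and the factor $3!$ relating Euclidean volume to normalized volume in dimension three — since everything else is forced once $\ell_i$ is chosen.
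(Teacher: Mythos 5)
Your proof is correct, but it takes a genuinely different route from the paper's. The paper's own proof is a two-line brute-force verification: it notes that the triangular prism $(\columnA,\widetilde{A})$ has exactly six regular triangulations and asserts that the identity (\ref{eq:simplex_volume_prism}) is checked on each of them by direct computation. You replace that enumeration with a single averaging argument: the affine function $\ell_i=\lambda_i\circ\pi$ equals $1$ exactly at $\pointupper_i,\pointlower_i$ and $0$ at the other four points of $\widetilde{A}$, so the mean-value property of affine functions on simplices turns $\int_{\columnA}\ell_i\,dx$ into $\tfrac14\sum_\sigma m_\sigma\cdot(\text{Euclidean volume of }\sigma)$, where $m_\sigma$ counts the vertices of $\sigma$ lying in $\{\pointupper_i,\pointlower_i\}$; after the normalization $\vol=3!\times(\text{Euclidean volume})$ this is $\tfrac1{24}\bigl(\eta_{\columnAtriangulation,3}(\pointupper_i)+\eta_{\columnAtriangulation,3}(\pointlower_i)\bigr)$, and comparing with the Fubini evaluation $\tfrac13\cdot\tfrac12\vol(\triangle)=\tfrac16\vol(\triangle)$ gives exactly $4\vol(\triangle)$. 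I checked your two normalization constants ($2!$ in dimension two, $3!$ in dimension three) and the vertex averages $\tfrac13$ and $\tfrac14$; they are all right. What your route buys: no case analysis, no reliance on the classification of triangulations of the prism, validity for arbitrary (not necessarily regular) triangulations --- consistent with the paper's remark after Proposition \ref{prop:enumerate} that regularity is not needed there --- and an evident generalization to prisms over simplices of any dimension. It is also the same device (Lemma 1.8 of Chapter 7 in \cite{gkz94}) that the paper itself invokes later in the proof of Proposition \ref{prop:donaldson-futaki}, so it fits naturally into the paper's toolkit; the paper's enumeration is shorter to state but conveys less.
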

\begin{proof}
Notice that $\columnA$ has six regular triangulations.
The statement follows by calculating the left hand in (\ref{eq:simplex_volume_prism}) for each triangulation directly.
\end{proof}

The following proposition completes the proof of Theorem \ref{thm:main_intro}.
\begin{proposition}\label{prop:enumerate}
	Let $\columnAtriangulation$ be any vertical regular triangulation associated with a given regular triangulation $T$ of $(Q,A)$.
	Then the Hurwitz vector for $T$
	$$
		\xi_{T}=
		2\eta_{T,2}-\eta_{T,1}
	$$
	is equal to the lattice point $\nu_{\columnAtriangulation}$ of $\hyperdiscriminantpolytopesurface$ corresponding to $\columnAtriangulation$.
\end{proposition}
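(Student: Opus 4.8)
The plan is to compute the integer vector $\nu_{\columnAtriangulation}$ coordinate by coordinate and match it against $\xi_{T}=2\eta_{T,2}-\eta_{T,1}$. Fix $\omega_i\in A$ and set $S_k:=\eta_{\columnAtriangulation,k}(\pointupper_i)+\eta_{\columnAtriangulation,k}(\pointlower_i)$ for $0\le k\le 3$; then by (\ref{eq:massiveGKZ}) applied with $n=3$ the $i$-th coordinate of $\nu_{\columnAtriangulation}$ is $\eta_{\columnAtriangulation}(\pointupper_i)+\eta_{\columnAtriangulation}(\pointlower_i)=S_3-S_2+S_1-S_0$. I would evaluate each $S_k$ by sorting the massive simplices of $\columnAtriangulation$ through $\pointupper_i,\pointlower_i$ according to the face of the prism $\columnA=\polytope\times I$ carrying them: the two facets $\polytope\times\{0,1\}$, the vertical rectangles $E\times I$ over the edges $E$ of $\polytope$, the vertical edges $\{v\}\times I$ over the vertices $v$ of $\polytope$, and the prism vertices.

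Since $\columnAtriangulation$ refines $\bigcup_{\triangle\in\Sigma_T(2)}\triangle\times I$, every maximal tetrahedron lies in a unique prism $\triangle\times I$ and it meets $\pointupper_i$ or $\pointlower_i$ only when $\omega_i$ is a vertex of $\triangle$; each restriction $\columnAtriangulation|_{\triangle\times I}$ is one of the six regular triangulations of a triangular prism, so Lemma \ref{lem:simplex_hurwitz} contributes $4\vol(\triangle)$, and summing over the triangles of $\Sigma_T(2)$ at $\omega_i$ gives $S_3=4\eta_{T,2}(\omega_i)$. Because $\columnAtriangulation$ induces $T$ on both facets, the massive triangles on $\polytope\times\{0,1\}$ through $\pointupper_i,\pointlower_i$ are the maximal cells of $T$ at $\omega_i$ counted once on each facet, contributing $2\eta_{T,2}(\omega_i)$ toward $S_2$, while the boundary edges of $T$ at $\omega_i$ lifted to both facets contribute $2\eta_{T,1}(\omega_i)$ toward $S_1$.

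The heart of the argument is the side contribution. I would first show that each triangle of $\columnAtriangulation$ lying in a rectangle $E\times I$ is in fact contained in a single sub-rectangle $e\times I$ over an edge $e\subset E$ of $T$: it is a face of a tetrahedron in some $\triangle\times I$, which meets $E\times I$ in $e\times I$ with $e=\triangle\cap E$. Hence the rectangle over each edge $e$ at $\omega_i$ is split by one diagonal into two triangles of area $\vol(e)$, and checking both possible diagonals shows the triangles through the pair $\{\pointupper_i,\pointlower_i\}$ contribute exactly $3\vol(e)$; summing over the one or two edges of $T$ at $\omega_i$ makes the side part of $S_2$ equal to $3\eta_{T,1}(\omega_i)$. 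Finally the vertical edge $\{\omega_i\}\times I$ and the vertices $\pointupper_i,\pointlower_i$ are genuine faces of $\columnA$ only when $\omega_i$ is a vertex of $\polytope$; writing $\delta_i$ for this indicator, the vertical edge adds $2\delta_i$ to $S_1$ and the vertices give $S_0=2\delta_i$, so that $S_2=2\eta_{T,2}(\omega_i)+3\eta_{T,1}(\omega_i)$ and $S_1=2\eta_{T,1}(\omega_i)+2\delta_i$.

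Combining everything, $\nu_{\columnAtriangulation,i}=S_3-S_2+S_1-S_0=4\eta_{T,2}-(2\eta_{T,2}+3\eta_{T,1})+(2\eta_{T,1}+2\delta_i)-2\delta_i=2\eta_{T,2}(\omega_i)-\eta_{T,1}(\omega_i)$, which is the $i$-th coordinate of $\xi_T$; the indicator $\delta_i$ cancels automatically, so the three cases $\omega_i\in\mathrm{int}\,\polytope$, $\omega_i\in\partial\polytope$ non-vertex, and $\omega_i$ a vertex of $\polytope$ fold into one formula. The step I expect to be the main obstacle is the side-rectangle bookkeeping: establishing the localization of the induced side-face triangulation onto the sub-rectangles $e\times I$, the universal local value $3\vol(e)$ independent of the chosen diagonal, and the exact cancellation of the vertical-edge and vertex terms so that no boundary correction survives. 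The remaining terms reduce cleanly to $\eta_{T,2}$ and $\eta_{T,1}$ via the facet restriction and Lemma \ref{lem:simplex_hurwitz}.
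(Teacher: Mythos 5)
Your proposal is correct and follows essentially the same route as the paper: it computes $\nu_{\columnAtriangulation,i}$ as the alternating sum of the level-$k$ contributions $S_k$ using Lemma \ref{lem:simplex_hurwitz} for $S_3$ and the top/bottom/side-rectangle/vertical-edge bookkeeping for $S_2,S_1,S_0$, merely folding the paper's three cases into one formula via the indicator $\delta_i$, whose contributions to $S_1$ and $S_0$ cancel. Note that your value $S_1=2\eta_{T,1}(\omega_i)+2\delta_i$ is the correct one: in the paper's second case the printed value $2\eta_{T,2}(\omega_i)$ for $\eta_{\columnAtriangulation,1}(\pointlower_i)+\eta_{\columnAtriangulation,1}(\pointupper_i)$ appears to be a typo for $2\eta_{T,1}(\omega_i)$, since otherwise the alternating sum would give $4\eta_{T,2}(\omega_i)-3\eta_{T,1}(\omega_i)$ rather than $2\eta_{T,2}(\omega_i)-\eta_{T,1}(\omega_i)$.
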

\begin{proof}
We show that 
$$
	\xi_{T}(\omega_i):= 2\eta_{T,2}(\omega_i)-\eta_{T,1}(\omega_i)
$$
is equal to $\nu_{\columnAtriangulation,i}$ for each $\omega_i$ in the following three cases separately:  
\begin{itemize}
	\item the case where $\omega_i$ is a vertex of $Q$, 
	\item the case where $\omega_i$ is an interior point of some edge of $Q$, and
	\item the case where $\omega_i$ is an interior point of $Q$.
\end{itemize}

For the first case, the statement follows from
\begin{eqnarray*}
		\eta_{\columnAtriangulation,3}(\pointlower_i)
		+
		\eta_{\columnAtriangulation,3}(\pointupper_i)
	&=&	
		4\eta_{T,2}(\omega_i),
	\\
		\eta_{\columnAtriangulation,2}(\pointlower_i)
		+
		\eta_{\columnAtriangulation,2}(\pointupper_i)
	&=&		
		2\eta_{T,2}(\omega_i)+3\eta_{T,1}(\omega_i),
	\\
		\eta_{\columnAtriangulation,1}(\pointlower_i)
		+
		\eta_{\columnAtriangulation,1}(\pointupper_i)
	&=&	
		2\eta_{T,1}(\omega_i)+2,	\\
		\eta_{\columnAtriangulation,0}(\pointlower_i)
		+
		\eta_{\columnAtriangulation,0}(\pointupper_i)
	&=&	
		2.
\end{eqnarray*}
The first equality in the above follows from Lemma \ref{lem:simplex_hurwitz}.

For the second case, the statement follows from
\begin{eqnarray*}
		\eta_{\columnAtriangulation,3}(\pointlower_i)
		+
		\eta_{\columnAtriangulation,3}(\pointupper_i)
	&=&	
		4\eta_{T,2}(\omega_i),
	\\
		\eta_{\columnAtriangulation,2}(\pointlower_i)
		+
		\eta_{\columnAtriangulation,2}(\pointupper_i)
	&=&		
		2\eta_{T,2}(\omega_i)+3\eta_{T,1}(\omega_i),
	\\
		\eta_{\columnAtriangulation,1}(\pointlower_i)
		+
		\eta_{\columnAtriangulation,1}(\pointupper_i)
	&=&	
		2\eta_{T,2}(\omega_i),
	\\
		\eta_{\columnAtriangulation,0}(\pointlower_i)
		+
		\eta_{\columnAtriangulation,0}(\pointupper_i)
	&=&	
		0.
\end{eqnarray*} 

For the third case, the statement follows from
\begin{eqnarray*}
		\eta_{\columnAtriangulation,3}(\pointlower_i)
		+
		\eta_{\columnAtriangulation,3}(\pointupper_i)
	&=&	
		4\eta_{T,2}(\omega_i),
	\\
		\eta_{\columnAtriangulation,2}(\pointlower_i)
		+
		\eta_{\columnAtriangulation,2}(\pointupper_i)
	&=&		
		2\eta_{T,2}(\omega_i),
	\\
		\eta_{\columnAtriangulation,1}(\pointlower_i)
		+
		\eta_{\columnAtriangulation,1}(\pointupper_i)
	&=&	
		0,	
	\\
		\eta_{\columnAtriangulation,0}(\pointlower_i)
		+
		\eta_{\columnAtriangulation,0}(\pointupper_i)
	&=&	
		0,
\end{eqnarray*} 
and $\eta_{T,1}(\omega_i)=0$.
Therefore, the proof is completed. 
\end{proof}
\noindent
Remark that Proposition \ref{prop:enumerate} still holds if $\columnAtriangulation$ is not regular.

\subsection{Proof of Corollary \ref{cor:degree}}
Theorem \ref{thm:main_intro} (with the equality (\ref{eq:degree_plucker})) implies that the degree of the Hurwitz form of a toric surface $\toricvariety$ in the Pl\"ucker coordinates is equal to 
\begin{equation}\label{eq:degree-1}
	\frac{1}{2}\sum_{i=1}^{N+1} \xi_T(\omega_i)
	=
	\sum_{i=1}^{N+1} \eta_{T,2}(\omega_i)
	- \frac{1}{2}\sum_{i=1}^{N+1} \eta_{T,1}(\omega_i)	
\end{equation}
for some (any) triangulation $T$.

Any simplex in $\Sigma_{T}(2)$ appears three times in the first summation in the right hand of (\ref{eq:degree-1}) because the simplex has three vertices.
Hence, we have
\begin{equation}\label{eq:degree-2}
	\sum_{i=1}^{N+1} \eta_{T,2}(\omega_i)
	= 3 \vol(Q).
\end{equation}

Since any massive edge in $\Sigma_{T}(1)$ are shared among its two endpoints, we have
\begin{equation}\label{eq:degree-3}
	\sum_{i=1}^{N+1} \eta_{T,1}(\omega_i)
	= 2 \vol(\partial Q).
\end{equation}
The statement follows from (\ref{eq:degree-1}), (\ref{eq:degree-2}) and (\ref{eq:degree-3}).
The proof is completed.

\begin{remark}\label{rem:degree}
The equality (\ref{eq:degree_hurwitz}) in Corollary \ref{cor:degree} is equivalent to the formula (5.53) in \cite{paul12}
$$
	\mathrm{deg}(\Delta_{\toricvariety\times \mathbb{P}^1})
	= d_{\toricvariety}(6-\mu) 
$$
in the case where $\toricvariety$ is a smooth toric surface, where $d_{\toricvariety} $ is the degree of $\toricvariety$ and $\mu$ is the average of the scalar curvature of $\toricvariety$.
Let us see it.
Since $\toricvariety$ is smooth, the degree $d_{\toricvariety}$ is equal to $\vol(Q)$.
Comparing the Riemann-Roch Theorem with the expansion of the Ehrhart polynomial, we find that the average $\mu$ of the scalar curvature is equal to $2\vol(\partial Q)/\vol(Q)$.
Applying the formula (5.53) in \cite{paul12}, we get (\ref{eq:degree_hurwitz}) modulo the factor $n=2$, because the formula in \cite{paul12} is written in the Steifels coordinates, whereas (\ref{eq:degree_hurwitz}) is written in the Pl\"ucker coordinates.
Also see the equality (\ref{eq:degree_hurwitz_general}) for the case of general dimension.
\end{remark}

\section{Converse to Theorem \ref{thm:main_intro}}\label{sec:converse}

To give a complete answer to Question \ref{que:hurwitz}, we need to prove the converse to Theorem \ref{thm:main_intro}, i.e., the convex hull of the Hurwitz vectors of all regular triangulations of $(Q,A)$ contains the Hurwitz polytope.
It suffices to prove that if $\columnAtriangulation$ is not $H$-equivalent to any vertical triangulation, then the vector $\nu_{\columnAtriangulation}$ is not equal to any vertex of $\hyperdiscriminantpolytopesurface$.
Although we cannot prove it in this note, we will give a typical example of $\columnAtriangulation$ whose $\nu_{\columnAtriangulation}$ is not equal to any vertex of $\hyperdiscriminantpolytopesurface$ instead of the proof of the converse to Theorem \ref{thm:main_intro}.

Let us recall a modification of a triangulation.
Let $Z\subset\Sigma_{\columnAtriangulation}(0)$ be \textit{a circuit}, i.e., any proper subset of $Z$ constitutes a simplex of some dimension, but $Z$ itself is not linearly independent.
Then the convex hull $\conv(Z)$ has only two triangulations $\columnAtriangulation ^+_Z$ and $\columnAtriangulation ^-_Z$.
If $\columnAtriangulation ^+_Z$ gives the given triangulation $\columnAtriangulation$, then $\columnAtriangulation ^-_Z$ gives another triangulation $T’$.
We call $\columnAtriangulation’$ \textit{the modification of $\columnAtriangulation $ along $Z$} (Chapter 7 \cite{gkz94}).
We denote it by $s_Z(\columnAtriangulation)$.

\begin{definition}\label{def:mixedsimplex}
	We say that a $3$-simplex $\triangle$ in $\Sigma_{\columnAtriangulation}(3)$ is \textit{mixed} when two of the vertices of $\triangle$ lie on the upper facet $Q\times\{1\}$ and the other two lie on the lower facet $Q\times\{0\}$.
\end{definition}
\noindent
For a mixed simplex $\triangle$, we denote the two vertices on the upper facet by $\pointupper_i,\,\pointupper_j$ and denote the other two vertices by $\pointlower_{i’},\pointlower_{j’}$.

\begin{definition}\label{def:mixedsimplexcubic}
	We say that a mixed simplex $\triangle$ is \textit{cubic} if and only if the following four simplices
	$$
		(\pointupper_{i},\pointlower_{i’},\pointlower_{j’},\pointlower_{i}),
		\,\,
		(\pointupper_{j},\pointlower_{i’},\pointlower_{j’},\pointlower_{j}),
	$$
	$$
		(\pointupper_{i},\pointupper_{j},\pointupper_{i’},\pointlower_{i’}),
		\,\,
		(\pointupper_{i},\pointupper_{j},\pointupper_{j’},\pointlower_{j’})
	$$
	are contained in $\Sigma _{\columnAtriangulation}(3)$ (See Figure \ref{pic:cubicmixedsimplex}).
\end{definition}

\begin{figure}
\begin{tikzpicture}
	\draw (0,0)node[left]{$\pointlower_{i}$}--(5,0) node[below]{$\pointlower_{i’}$}--(7,1)node[right]{$\pointlower_{j}$};
	\draw[dashed] (7,1)--(2,1)--(0,0);
	\draw (1.75,1)node[left]{$\pointlower_{j’}$};
	\draw[shift={(0,4)}] (0,0)node[left]{$\pointupper_{i}$}--(5,0) node[above]{$\pointupper_{i’}$}--(7,1)node[right]{$\pointupper_{j}$};
	\draw[shift={(0,4)}] (7,1)--(2,1)node[above]{$\pointupper_{j’}$}--(0,0);
	\draw (0,0)--(0,4);
	\draw[shift={(5,0)}] (0,0)--(0,4);
	\draw[shift={(7,1)}] (0,0)--(0,4);
	\draw[shift={(2,1)},dashed] (0,0)--(0,4);
	\draw (0,4)--(7,5);
	\draw[dashed] (5,0)--(2,1);
	\draw (5,0)--(7,5);
	\draw[dashed] (2,1)--(0,4);
	\draw[dashed] (2,1)--(7,5);
	\draw (5,0)--(0,4);	
\end{tikzpicture}	
\caption{Cubic mixed simplex $(\pointupper_i,\pointupper_j,\pointlower_{i’},\pointlower_{j’})$}
\label{pic:cubicmixedsimplex}
\end{figure}

\begin{proposition}\label{prop:nonvertex}
	If a regular triangulation $\columnAtriangulation$ of $(\columnA,\widetilde{A})$ admits a cubic mixed simplex $\triangle$, then the associated vector $\nu_{\columnAtriangulation}$ is not any vertex of $\hyperdiscriminantpolytopesurface$.
\end{proposition}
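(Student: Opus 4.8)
The plan is to prove that $\nu_{\columnAtriangulation}$ is not a vertex by exhibiting it as the midpoint of two distinct points of $\hyperdiscriminantpolytopesurface$, each obtained by a bistellar flip localized at the cubic mixed simplex. Keep the notation $\triangle=(\pointupper_{i},\pointupper_{j},\pointlower_{i'},\pointlower_{j'})$ of Definition \ref{def:mixedsimplexcubic}, so that the cubic hypothesis places the two ``lower'' tetrahedra $(\pointupper_{i},\pointlower_{i'},\pointlower_{j'},\pointlower_{i})$, $(\pointupper_{j},\pointlower_{i'},\pointlower_{j'},\pointlower_{j})$ and the two ``upper'' tetrahedra $(\pointupper_{i},\pointupper_{j},\pointupper_{i'},\pointlower_{i'})$, $(\pointupper_{i},\pointupper_{j},\pointupper_{j'},\pointlower_{j'})$ in $\Sigma_{\columnAtriangulation}(3)$. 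First I would form the bipyramid $B^{-}=\triangle\cup(\pointupper_{i},\pointlower_{i'},\pointlower_{j'},\pointlower_{i})$, glued along the triangle $(\pointupper_{i},\pointlower_{i'},\pointlower_{j'})$ with apices $\pointupper_{j},\pointlower_{i}$, together with the bipyramid $B^{+}=\triangle\cup(\pointupper_{i},\pointupper_{j},\pointupper_{i'},\pointlower_{i'})$, glued along $(\pointupper_{i},\pointupper_{j},\pointlower_{i'})$ with apices $\pointlower_{j'},\pointupper_{i'}$. Each $B^{\pm}$ is a union of two tetrahedra of $\columnAtriangulation$ and so carries the circuit $Z^{\pm}$ of its five vertices; the associated modifications $s_{Z^{-}}(\columnAtriangulation)$ and $s_{Z^{+}}(\columnAtriangulation)$ are precisely the two $2$--$3$ flips that reconfigure $\triangle$ with one lower, respectively one upper, adjacent tetrahedron.

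The crucial point is that each flip is internal to its bipyramid: every boundary face of $B^{\pm}$ is preserved, in particular the single triangle it contributes to $Q\times\{0\}$ or $Q\times\{1\}$ and every face it shares with the rest of $\columnAtriangulation$. Hence $s_{Z^{\pm}}(\columnAtriangulation)$ is a genuine triangulation of $(\columnA,\widetilde{A})$ that agrees with $\columnAtriangulation$ outside $B^{\pm}$, it differs from $\columnAtriangulation$ only in dimension three, and every massive simplex of dimension at most two is unchanged. Consequently the variation of $\nu$ is read off from the signed three-dimensional volumes alone, and a direct bookkeeping of the five normalized volumes entering each flip shows that $\nu_{s_{Z^{-}}(\columnAtriangulation)}-\nu_{\columnAtriangulation}$ and $\nu_{s_{Z^{+}}(\columnAtriangulation)}-\nu_{\columnAtriangulation}$ are both supported on the coordinates $i,j,i',j'$ and are exactly opposite and nonzero, with entries given by the normalized areas of the four triangles into which the diagonals cut the quadrilateral $\conv(\omega_{i},\omega_{i'},\omega_{j},\omega_{j'})$. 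This cancellation is the up--down symmetry built into the notion of a cubic mixed simplex, and it is the heart of the argument.

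From the opposite, nonzero increments one gets
\[
	\nu_{\columnAtriangulation}
	=\tfrac12\big(\nu_{s_{Z^{-}}(\columnAtriangulation)}+\nu_{s_{Z^{+}}(\columnAtriangulation)}\big),
\]
with the two right-hand points distinct, so $\nu_{\columnAtriangulation}$ is the midpoint of a genuine segment and cannot be a vertex of $\hyperdiscriminantpolytopesurface$ as soon as both endpoints are known to lie in $\hyperdiscriminantpolytopesurface$. This membership is the step I expect to be the main obstacle: a single bistellar flip need not preserve regularity, whereas Proposition \ref{prop:vertices_nuvector} presents $\hyperdiscriminantpolytopesurface$ only as the convex hull of the vectors $\nu$ attached to the \emph{regular} triangulations. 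I would close the gap by proving the discriminant analogue of the classical containment for secondary polytopes, namely that the massive GKZ vector of \emph{any} triangulation of $(\columnA,\widetilde{A})$ lies in the Newton polytope $\mathcal{D}(\hyperdiscriminantsurface)$; its image under the projection $\pi$ of Proposition \ref{prop:vertices_nuvector} then lies in $\hyperdiscriminantpolytopesurface$, and the midpoint relation completes the proof. Alternatively one could attempt to exhibit explicit height functions realizing $s_{Z^{\pm}}(\columnAtriangulation)$ as regular triangulations, which would place the endpoints among the $\nu$-vectors of regular triangulations and bypass the containment lemma.
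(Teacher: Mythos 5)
Your proposal is correct and follows essentially the same route as the paper: you flip $\columnAtriangulation$ along the same two circuits (one through an adjacent upper tetrahedron, one through an adjacent lower tetrahedron), observe that the resulting changes in $\nu$ are supported on the coordinates $i,j,i',j'$ with exactly opposite nonzero entries (the quantities $a,b,c,d$ with $a+b=c+d=\vol(\triangle)$ in the paper's notation), and conclude that $\nu_{\columnAtriangulation}$ is the midpoint of the two new $\nu$-vectors. The only substantive difference is that you explicitly flag, and propose ways to close, the question of whether those two endpoints actually lie in $\hyperdiscriminantpolytopesurface$ (i.e.\ whether the flipped triangulations are regular, or more generally whether massive GKZ vectors of arbitrary triangulations lie in the Newton polytope of the discriminant) --- a step the paper's proof passes over in silence.
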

\begin{proof}
Let 
\begin{eqnarray*}
			\vol((\pointupper_i,\pointupper_j,\pointupper_{i’}))
			=	
			a,
		&&		
			\vol((\pointupper_i,\pointupper_j,\pointupper_{j’}))
			=
			b,
		\\
			\vol((\pointupper_{i},\pointlower_{i’},\pointlower_{j’}))
			=	
			c,
		&&		
			\vol((\pointupper_{j},\pointlower_{i’},\pointlower_{j’}))
			=
			d.		
\end{eqnarray*}
By definition, we have
$$
	\vol(\triangle)
	=\vol((\pointupper_{i},\pointupper_{j},\pointlower_{i’},\pointlower_{j’}))
	=a+b=c+d.
$$
Let $Z_1$ be the circuit consisting of 
$$
	\{\pointupper_i,\pointupper_j,\pointupper_{i’},\pointlower_{i’},\pointlower_{j’}\}.
$$
Then we have
\begin{eqnarray*}
		\nu_{s_{Z_1}(\columnAtriangulation),i}
		=
		\nu_{\columnAtriangulation,i}-d,
	&&
		\nu_{s_{Z_1}(\columnAtriangulation),j}
		=
		\nu_{\columnAtriangulation,j}-c,
	\\
		\nu_{s_{Z_1}(\columnAtriangulation),i’}
		=
		\nu_{\columnAtriangulation,i’}+b,
	&&
		\nu_{s_{Z_1}(\columnAtriangulation),i}
		=
		\nu_{\columnAtriangulation,i}+a.
\end{eqnarray*}
For the other vectors, we have
$$
	\nu_{s_{Z_1}(\columnAtriangulation),k}
	=
	\nu_{\columnAtriangulation,k},
	\,\, k\neq i, j, i’, j’.
$$
Similarly, for the circuit $Z_2$ defined by
$$
	\{\pointupper_i,\pointupper_j,\pointlower_{i},\pointlower_{i’},\pointlower_{j’}\},
$$
we have
\begin{eqnarray*}
		\nu_{s_{Z_2}(\columnAtriangulation),i}
		=
		\nu_{\columnAtriangulation,i}+d,
	&&
		\nu_{s_{Z_2}(\columnAtriangulation),j}
		=
		\nu_{\columnAtriangulation,j}+c,
	\\
		\nu_{s_{Z_2}(\columnAtriangulation),i’}
		=
		\nu_{\columnAtriangulation,i’}-b,
	&&
		\nu_{s_{Z_2}(\columnAtriangulation),i}
		=
		\nu_{\columnAtriangulation,i}-a,
\end{eqnarray*}
and the other vectors are not changed.
Therefore, $\nu_{\columnAtriangulation}$ is the middle point of the segment from $\nu_{s_{Z_1}(\columnAtriangulation)}$ to $\nu_{s_{Z_2}(\columnAtriangulation)}$.
The proof is completed.
\end{proof}	

A regular triangulation $\columnAtriangulation$ with a cubic mixed simplex is a typical example of the non-vertical cases.
However, there exist non-vertical triangulations $\columnAtriangulation$, whose vector $\nu_{\columnAtriangulation}$ is not equal to the middle point of any edge of $\hyperdiscriminantpolytopesurface$.
\begin{example}
Let $A$ be the point configuration defined by (\ref{eq:pointconfiguration}).
Following the labelling shown in Figure \ref{pic:pointconfiguration}, we label the point of $\widetilde{A}$ so that if $(i_+)$ (resp. $(i_-)$) indicates the lattice point on the upper (resp. lower) facet of $\columnA$ whose projection to $A$ is equal to the point $(i)$.
Let $\columnAtriangulation$ be the regular triangulation consisting of the following simplices:
\begin{eqnarray*}
	&&
		\langle 1_+,2_+,6_+,2_- \rangle,\, 
		\langle 2_+,3_+,4_+,3_- \rangle,\, 
		\langle 2_+,4_+,5_+,2_- \rangle,\, 
		\langle 2_+,4_+,2_-,3_- \rangle,\, 
	\\
	&&
		\langle 2_+,5_+,6_+,2_- \rangle,\,
		\langle 3_+,4_+,3_-,4_- \rangle,\,
		\langle 4_+,5_+,1_-,2_- \rangle,\, 
		\langle 4_+,5_+,1_-,6_- \rangle,\,
	\\
	&&
		\langle 4_+,1_-,2_-,4_- \rangle,\,
		\langle 4_+,1_-,4_-,5_- \rangle,\, 
		\langle 4_+,1_-,5_-,7_- \rangle,\,
		\langle 4_+,2_-,3_-,4_- \rangle,\,
	\\
	&&
		\langle 5_+,6_+,2_-,6_- \rangle,\,
		\langle 5_+,1_-,2_-,6_- \rangle,\,
		\langle 6_+,2_-,6_-,7_- \rangle.
\end{eqnarray*}
Then the associated vector $\nu_{\columnAtriangulation}$ is equal to 
$$
	(2, 7, 4, 0, 7, 4, 0).
$$
Although this vector is neither any vertex of $\hyperdiscriminantpolytopesurface$ nor any middle point of them as Proposition \ref{prop:nonvertex}, the vector is contained in the convex of the Hurwitz vectors.
The list of the Hurwitz vectors of this example is as follows:
\begin{eqnarray*}
	&&
		(12, 2, 2, 2, 2, 2, 2),
		(10, 0, 4, 2, 2, 2, 4),
		(10, 2, 2, 2, 4, 0, 4),
		(10, 2, 2, 4, 0, 4, 2),
	\\
	&&
		(10, 2, 4, 0, 4, 2, 2),
		(10, 4, 0, 4, 2, 2, 2),
		(10, 4, 2, 2, 2, 4, 0),
		(8, 0, 4, 2, 4, 0, 6),
	\\
	&&
		(8, 0, 4, 4, 0, 4, 4),
		(8, 0, 6, 0, 4, 2, 4),
		(8, 2, 4, 0, 6, 0, 4),
		(8, 4, 0, 4, 4, 0, 4),
	\\
	&&
		(8, 4, 0, 6, 0, 4, 2),
		(8, 4, 2, 4, 0, 6, 0),
		(8, 4, 4, 0, 4, 4, 0),
		(8, 6, 0, 4, 2, 4, 0),
	\\
	&&
		(6, 0, 6, 0, 6, 0, 6),
		(6, 6, 0, 6, 0, 6, 0),
		(0, 0, 4, 6, 4, 0, 10),
		(0, 0, 4, 8, 0, 4, 8),
	\\
	&&
		(0, 0, 8, 4, 0, 8, 4),
		(0, 0, 8, 0, 8, 0, 8),
		(0, 0, 10, 0, 4, 6, 4),
		(0, 4, 0, 8, 4, 0, 8),
	\\
	&&
		(0, 4, 0, 10, 0, 4, 6),
		(0, 4, 6, 4, 0, 10, 0),
		(0, 4, 8, 0, 4, 8, 0),
		(0, 6, 4, 0, 10, 0, 4),
	\\
	&&
		(0, 8, 0, 4, 8, 0, 4),
		(0, 8, 0, 8, 0, 8, 0),
		(0, 8, 4, 0, 8, 4, 0),
		(0, 10, 0, 4, 6, 4, 0).
\end{eqnarray*}
\end{example}
Provided that we proved that the associated vector $\nu_{\columnAtriangulation}$ of a  non-vertical regular triangulation $\columnAtriangulation$ as above does not give any vertex of $\hyperdiscriminantpolytopesurface$, the following conjecture would hold.
\begin{conjecture}\label{conj:main}
		Let $A$ be a point configuration $A \subset \mathbb{Z}^{2}$.
	Let $\polytope$ be its convex hull.
	The Hurwitz vectors of all regular triangulations of $(Q,A)$ are in one-to-one correspondence with the vertices of the convex hull of the vectors $\nu_{\columnAtriangulation}$ for all regular triangulations of $(\columnA, \widetilde{A})$.
	In particular, if the associated toric surface $\toricvariety$ is smooth,
	then the Hurwitz polytope of $\toricvariety$ coincides with the convex hull of the Hurwitz vectors of all regular triangulations of $(Q,A)$.
\end{conjecture}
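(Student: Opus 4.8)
The plan is to reduce the entire conjecture to the single combinatorial statement, already isolated in Section \ref{sec:converse}, that the vector $\nu_{\columnAtriangulation}$ of a non-vertical regular triangulation is never a vertex of the convex hull of all the $\nu_{\columnAtriangulation}$. Granting this, the first assertion is purely combinatorial: by Proposition \ref{prop:vertices_nuvector} every vertex of $\hyperdiscriminantpolytopesurface$ equals some $\nu_{\columnAtriangulation}$, so it must come from a triangulation $H$-equivalent to a vertical one, and by Proposition \ref{prop:enumerate} the vertical ones produce exactly the Hurwitz vectors $\xi_T$; thus the vertex set is contained in $\{\xi_T\}$. For the second assertion one combines this inclusion---which yields $\hyperdiscriminantpolytopesurface=\conv\{\xi_T\}$ together with the reverse inclusion $\conv\{\xi_T\}\subseteq\hyperdiscriminantpolytopesurface$ of Theorem \ref{thm:main_intro}---with smoothness, which is what lets Theorem \ref{thm:discriminant} identify the convex hull of the $\nu_{\columnAtriangulation}$ with the Hurwitz polytope. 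Everything therefore rests on the non-extremality of $\nu_{\columnAtriangulation}$ for non-vertical $\columnAtriangulation$.

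First I would set up a clean classification of the regular triangulations of the prism $(\columnA,\widetilde{A})=(Q\times I,\,A\times\{0,1\})$. The convenient bookkeeping device is the Cayley trick, which identifies these triangulations with the regular fine mixed subdivisions of the Minkowski sum $2Q$: a mixed $3$-simplex of $\columnAtriangulation$, with two vertices on each facet, corresponds to a parallelogram cell $e+e'$ formed from an edge $e$ of the lower copy and an edge $e'$ of the upper copy, whereas the $(3,1)$- and $(1,3)$-simplices correspond to unmixed triangular cells. In this language a triangulation is vertical precisely when every parallelogram cell is spanned by two edges of one and the same triangle of the base triangulation $T$, and the non-vertical ones are exactly those carrying a parallelogram whose two edges lie in different base triangles; the cubic mixed simplex of Definition \ref{def:mixedsimplexcubic} handled in Proposition \ref{prop:nonvertex} is one canonical local instance of such a ``cross-triangle'' parallelogram.

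The heart of the argument, and the step I expect to be the main obstacle, is to upgrade Proposition \ref{prop:nonvertex} from this single pattern to every non-vertical configuration. The method is to attach to a non-vertical $\columnAtriangulation$ a family of circuits $Z_1,\dots,Z_r$ for which the modifications $s_{Z_k}(\columnAtriangulation)$ stay regular and there is a relation $\sum_{k}\lambda_k\,\nu_{s_{Z_k}(\columnAtriangulation)}=\big(\sum_k\lambda_k\big)\,\nu_{\columnAtriangulation}$ with all $\lambda_k>0$, which exhibits $\nu_{\columnAtriangulation}$ as a proper convex combination of the $\nu_{s_{Z_k}(\columnAtriangulation)}$ and hence not a vertex. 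Proposition \ref{prop:nonvertex} is the case $r=2$, $\lambda_1=\lambda_2$, in which $\nu_{\columnAtriangulation}$ is a midpoint. The difficulty is genuine: the Example following Proposition \ref{prop:nonvertex} exhibits a non-vertical $\columnAtriangulation$ with $\nu_{\columnAtriangulation}=(2,7,4,0,7,4,0)$ that is not the midpoint of any edge, so a single circuit modification can never suffice and one must control the joint effect of several at once. Concretely I would try to show that around any cross-triangle parallelogram one can perform a bounded cascade of circuit modifications placing $\nu_{\columnAtriangulation}$ strictly inside the convex hull of the resulting vectors, and that these modifications can always be chosen so as to preserve regularity of the three-dimensional triangulation, uniformly over the many possible local patterns. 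This uniform, regularity-preserving construction is the delicate combinatorial core, and is precisely where I expect the two-dimensionality of $Q$ and the smoothness of $\toricvariety$ to enter.

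Finally, to promote the inclusion of the vertex set into $\{\xi_T\}$ to a genuine bijection one must also verify that $T\mapsto\xi_T$ is injective and that each $\xi_T$ is itself a vertex. Injectivity should follow by separating the boundary and interior contributions to $\xi_T=2\eta_{T,2}-\eta_{T,1}$: two triangulations inducing the same subdivision of $\partial Q$ share $\eta_{T,1}$ and are then distinguished by the GKZ vectors $\eta_{T,2}$, which are distinct vertices of the secondary polytope $\secondarypoly$, while triangulations with different boundary subdivisions are separated by the coordinates indexed by boundary lattice points. Extremality of each $\xi_T$ I would read off from the secondary-fan picture, by exhibiting for each vertical $\columnAtriangulation$ a height function making $\nu_{\columnAtriangulation}$ the unique maximizer of the associated linear functional. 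Combined with the non-extremality result, this gives the conjectured one-to-one correspondence and, under smoothness, the equality $\hyperdiscriminantpolytopesurface=\conv\{\xi_T\}$; the K\"ahler-geometric approach announced in \cite{sano} would provide an independent check.
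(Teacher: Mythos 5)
The statement you are addressing is stated in the paper as a \emph{conjecture}, and the paper supplies no proof of it: Section \ref{sec:converse} reduces the problem to showing that $\nu_{\widetilde{T}}$ is not a vertex of $\hyperdiscriminantpolytopesurface$ whenever $\widetilde{T}$ is not $H$-equivalent to a vertical triangulation, establishes this only for the single local pattern of a cubic mixed simplex (Proposition \ref{prop:nonvertex}), exhibits an example showing that the midpoint argument cannot cover all cases, and then records in a remark that the first statement of Conjecture \ref{conj:main} ``as a combinatorial problem is open.'' Your proposal performs essentially the same reduction (via Propositions \ref{prop:vertices_nuvector} and \ref{prop:enumerate}, exactly as the paper does), and then, at the decisive step, says only that you ``would try to show'' that around any cross-triangle parallelogram a regularity-preserving cascade of circuit modifications places $\nu_{\widetilde{T}}$ strictly in the interior of the convex hull of the modified vectors. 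No such cascade is constructed, no relation $\sum_k\lambda_k\nu_{s_{Z_k}(\widetilde{T})}=(\sum_k\lambda_k)\nu_{\widetilde{T}}$ is exhibited beyond the $r=2$ case already in Proposition \ref{prop:nonvertex}, and no argument is given that the modifications can be chosen to preserve regularity. You yourself flag this as ``the delicate combinatorial core''; it is precisely the open problem, so the proposal has a genuine gap at its central step and does not constitute a proof.

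Two further points. First, the claim you propose to grant is stated too strongly: a non-vertical $\widetilde{T}$ may be $H$-equivalent to a vertical one, i.e.\ $\nu_{\widetilde{T}}=\nu_{\widetilde{T}'}$ for some vertical $\widetilde{T}'$, in which case $\nu_{\widetilde{T}}$ can perfectly well be a vertex; the correct target, as formulated in Section \ref{sec:converse}, concerns triangulations not $H$-equivalent to \emph{any} vertical one. Second, your closing bijectivity argument is also incomplete: the paper proves injectivity of $T\mapsto\xi_T$ (Proposition \ref{prop:polytopes}) only under the extra hypothesis that every edge of $Q$ has lattice length one, and for a boundary lattice point used by one triangulation but not by another the possible cancellation between $2\eta_{T,2}$ and $\eta_{T,1}$ in $\xi_T=2\eta_{T,2}-\eta_{T,1}$ is not controlled by the separation you describe; likewise the extremality of each $\xi_T$ is asserted rather than derived. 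Your Cayley-trick reformulation of verticality in terms of fine mixed subdivisions is a reasonable organizing device, but it does not by itself close the open combinatorial core.
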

\begin{remark}
		The second statement of Conjecture \ref{conj:main} will be proved in \cite{sano} for polarized smooth toric varieties of general dimension  by employing some results in K\"ahler geometry.
		However, the first statement of Conjecture \ref{conj:main} as a combinatorial problem is open.\end{remark}

\section{Examples}\label{sec:examples}
We will see that Conjecture \ref{conj:main} holds for some toric surfaces.
We collect such examples from \cite{sturmfels17} (with \cite{kl20}) and some reflexive polytopes.
A part of the computation in this section is carried out by SageMath with TOPCOM.

\begin{example}[Example 3.1 \cite{sturmfels17}]
Let $A$ be the point configuration given by
$$
	\{
		(0,0),(1,0),(1,1),(0,1)
	\}.
$$
We label the points by $\omega_i\,\,(1\le i \le 4)$ in order.
The associated variety $\toricvariety$ is the Segre variety $\mathbb{P}^1\times\mathbb{P}^1$ in $\mathbb{P}^3$.
The polytope $(Q,A)$ has only two regular triangulations:
$$
	T_1= \{
		(\omega_1,\omega_2,\omega_3),\,
		(\omega_1,\omega_3,\omega_4)
	\},\,\,
	T_2= \{
		(\omega_2,\omega_3,\omega_4),\,
		(\omega_1,\omega_2,\omega_4)
	\},
$$
while $(\columnA,\widetilde{A})$ has $74$ regular triangulations.
The corresponding Hurwitz vectors are given by 
$$
	\xi_{T_1}=(2,0,2,0),\,
	\xi_{T_2}=(0,2,0,2).
$$
The middle point $(1,1,1,1)$ between $\xi_{T_1}$ and $\xi_{T_2}$ are given by a regular triangulation containing the cubic mixed simplex $(\pointupper_1,\pointupper_3,\pointlower_2,\pointlower_4)$.
\end{example}

\begin{example}[Example 3.2 \cite{sturmfels17} with Example 2.3 \cite{kl20}]

Let $A$ be the point configuration given by
$$
	\{
		(0,0),(0,1),(0,2),(1,0),(1,1),(2,0)
	\}.
$$
We label the points by $\omega_i\,\,(1\le i \le 6)$ in order (we follow the order in Example 2.3 \cite{kl20}).
The associated variety $\toricvariety$ is the Veronese surface $\mathbb{P}^2$ in $\mathbb{P}^5$.
The polytope $(Q,A)$ has $14$ regular triangulations as shown in Figure 1 \cite{kl20}, while $(\columnA,\widetilde{A})$ has $28080$ regular triangulations.
The corresponding Hurwitz vectors are given by 
$$
	(4,0,1,0,6,1),\,
	(3,2,0,0,6,1),\,
	(3,0,1,2,6,0),\,
	(2,2,0,2,6,0),
$$
$$
	(1,0,4,6,0,1),\,
	(0,2,3,6,0,1),\,
	(1,0,3,6,2,0),\,
	(0,2,2,6,2,0),
$$
$$
	(1,6,1,0,0,4),\,
	(0,6,1,2,0,3),\,
	(1,6,0,0,2,3),\,
	(0,6,0,2,2,2),
$$
$$
	(4,0,4,0,0,4),\,
	(0,4,0,4,4,0)
$$
in the order of the regular triangulations described in Example 2.3 \cite{kl20}.
This computation coincides with the computation of the Hurwitz polytope of $\toricvariety$ in Example 3.2 \cite{sturmfels17}.
\end{example}

\begin{example}[Reflexive polytopes]
A polytope $Q\subset\mathbb{R}^n$ is \textit{reflexive} if its vertices are primitive lattice points and its polar dual polytope is also a lattice polytope.
The reflexive polytopes correspond uniquely to Gorenstein toric Fano varieties.
Such varieties are studied well in the context of the problem of K\"ahler-Einstein metrics.
For a polytope $Q$, we take $A$ by the set of all lattice points on $Q$.
Remark that by definition, $Q$ has only one interior lattice point.

The reflexive polytopes are classified completely in low dimensions.
We refer to the labelling of two dimensional reflexive polytopes indicated in Proposition 3.4.1 \cite{nill05}.
We confirm that Conjecture \ref{conj:main} holds for some reflexive polygons shown in Table \ref{table:reflexive}.
\begin{remark}
	Example 6a corresponding to the point configuration defined in (\ref{eq:pointconfiguration}) is the only \textit{smooth} surface among Table \ref{table:reflexive}.
	Even if $\toricvariety$ is not smooth, we still denote the convex hull of the vectors $\nu_{\columnAtriangulation}$ by $\hyperdiscriminantpolytopesurface$ in Table \ref{table:reflexive}.	
\end{remark}
\begin{table}[h]
	\caption{Reflexive Polygons}
	\label{table:reflexive}
	\begin{tabular}{|c|c|c|c|c|}
		\hline
		$Q$ & $T$ & $\columnAtriangulation$ & $\hyperdiscriminantpolytopesurface$ & normally equivalence 
		\\ \hline
		$3$ &$2$&$84$&$2$& true 
		\\ \hline
		$4a$ &$3$&$544$&$3$& true 
		\\ \hline
		$4b$ &$4$&$1270$&$4$& true 
		\\ \hline
		$4c$ &$4$&$844$&$4$& true 
		\\ \hline
		$5a$ &$10$&$26540$&$10$& true 
		\\ \hline
		$5b$ &$12$&$33380$&$12$& true 
		\\ \hline
		$6a$ & $32$ &$928930$& $32$ & true 
		\\ \hline
		$6b$ &$35$&$980824$&$35$& true 
		\\ \hline
		$6c$ &$35$&$980824$&$35$& true 
		\\ \hline
		$6d$ &$32$&$696710$&$32$& true 
		\\
		\hline
	\end{tabular}
\end{table}
The number in the first column indicates the label of the reflexive polygons in \cite{nill05}.
This number also indicates the number of lattice points on the boundary.
The number in the second (resp. third) column indicates the number of regular triangulations of $(Q,A)$ (resp. $(\columnA, \widetilde{A})$).
The number in the fourth column indicates whether $\hurpolytope=\hyperdiscriminantpolytopesurface$ and $\chowpolytope$ are normally equivalent or not.
\end{example}

\section{Relations with $K$-stabilites}
\label{sec:k-stability}
We discuss relations of the Hurwitz vectors to two kinds of $K$-stabilities: $K$-stability of pairs defined by Paul \cite{paul12} and toric $K$-stability defined by Donaldson \cite{donaldson02}.

\subsection{$K$-semistability of pairs}
First, we provide an application of Theorem \ref{thm:main_intro} to $K$-semistability of pairs by Paul \cite{paul12}. 

Let us recall its definition briefly.
Let $X$ be an $n$-dimensional, smooth, linearly normal, complex algebraic variety in $\mathbb{P}^N$ of degree $d_X \ge 2$.
Take a maximal algebraic torus $H$ in $\mathrm{SL}(N+1,\mathbb{C})$.
Let $\chowform$ be the Chow form of $X$, which is the defining polynomial of the divisor on the Grassmannian $\mathbb{G}(N-n-1, \mathbb{P}^N)$ defined by
$$
	\{ L \in \mathbb{G}(N-n-1, \mathbb{P}^N) \mid\, L\cap X\neq \emptyset\}.
$$
Let $\chowpolytopeh$ be the Chow polytope of $X$, i.e., the weight polytope of $\chowform$ with respect to $H$.
On the other hand, we denote by $\hyperdiscriminantpolytopeh$ the weight polytope of the hyperdiscriminant (Hurwitz form) of $X$ with respect to $H$.
Remark that $H$ acts on $\mathbb{P}^N\times\mathbb{P}^{n-1}$ so that it acts on the second factor trivially. 
Remark that we denote the weight polytopes with respect to the $\torus$-action by $\chowpolytope$ and $\hyperdiscriminantpolytope$  as ever.

We say that the pair $(\chowform^{\deg(\hyperdiscriminant)}, \hyperdiscriminant^{\deg(\chowform)})$ is $K$-semistable with respect to a maximal torus $H$ in $\mathrm{SL}(N+1,\mathbb{C})$ if and only if 
\begin{equation}\label{eq:ksemistable}
	\deg(\hyperdiscriminant)
	\chowpolytopeh
	\subseteq
	\deg(\chowform)
	\hyperdiscriminantpolytopeh.	
\end{equation}
Remark that both of $\deg(\hyperdiscriminant)$ and $\deg(\chowform)$ are written in Pl\"ucker coordinates on the Grassmannian.

Theorem \ref{thm:main_intro} implies a condition to $K$-semistability of pairs in the case of toric surfaces if $H$ is the standard torus.

\begin{corollary}\label{cor:ksemistability}
	Let $A$ be a point configuration $A \subset \mathbb{Z}^{2}$.
	Let $\polytope$ be its convex hull.
	Assume that the associated toric surface $\toricvariety$ is smooth.
	Let $\conv(\{\xi_T\})$ be the convex hull of the Hurwitz vectors $\xi_T$ of all regular triangulations $T$ on $(Q,A)$.
	If the scaled convex hull of $\conv(\{\xi_T\})$  dominates the scaled Chow polytope
	\begin{equation}\label{eq:convexhurwitz_chowpolytope}
		\deg(\Delta_{X_A\times \mathbb{P}^1})
		\chowpolytope
		\subseteq
		\deg(R_{X_A}) \conv(\{\xi_T\}),
	\end{equation}
	then the pair $(R_{X_A}^{\deg(\Delta_{X_A\times \mathbb{P}^1})}, \Delta_{X_A\times \mathbb{P}^1}^{\deg(R_{X_A})})$ is $K$-semistable with respect to the standard torus $H_{\mathrm{st}}\simeq (\mathbb{C}^\times)^N$ given by
$$
	\begin{pmatrix}
		t_{1} &  &  \\
		 & \ddots & \\
		 && (t_1\cdots t_{N})^{-1}
	\end{pmatrix} \in \mathrm{SL}(N+1,\mathbb{C}).
$$	
\end{corollary}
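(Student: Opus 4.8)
The plan is to deduce the inclusion (\ref{eq:ksemistable}) for the maximal torus $H_{\mathrm{st}}\subset\mathrm{SL}(N+1,\mathbb{C})$ from the corresponding inclusion for the full torus $\torus$, by transporting the latter along the linear map that records the restriction of characters from $\torus$ to $H_{\mathrm{st}}$. The point is that the two weight polytopes live in different ambient spaces ($\mathbb{R}^{N+1}$ for $\torus$ and $\mathbb{R}^N$ for $H_{\mathrm{st}}$), and the passage between them is purely linear, hence compatible with both the dilations and the inclusions appearing in the definition of $K$-semistability.

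First I would make the restriction map explicit. Writing a point of $H_{\mathrm{st}}$ as $(t_1,\ldots,t_N)$ and embedding it in $\torus$ as $(t_1,\ldots,t_N,(t_1\cdots t_N)^{-1})$, the $\torus$-character $\mathbf{a}=(a_1,\ldots,a_{N+1})$ restricts to the $H_{\mathrm{st}}$-character obtained by substituting $t_{N+1}=(t_1\cdots t_N)^{-1}$, namely $(a_1-a_{N+1},\ldots,a_N-a_{N+1})$. Thus restriction is the linear projection
$$\pi:\mathbb{R}^{N+1}\longrightarrow\mathbb{R}^N,\qquad (a_1,\ldots,a_{N+1})\longmapsto (a_1-a_{N+1},\ldots,a_N-a_{N+1}).$$
Since the $H_{\mathrm{st}}$-weights of a $\torus$-semi-invariant are exactly the $\pi$-images of its $\torus$-weights, and $\pi$ commutes with taking convex hulls, I obtain the identities $\mathcal{W}_{H_{\mathrm{st}}}(\chowform)=\pi(\chowpolytope)$ and $\mathcal{W}_{H_{\mathrm{st}}}(\hyperdiscriminantsurface)=\pi(\hyperdiscriminantpolytopesurface)$.

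Next I would assemble the inclusion at the level of $\torus$. Because $n=2$, we have $\hyperdiscriminantpolytopesurface=\hurpolytope$, so Theorem \ref{thm:main_intro} gives $\conv(\{\xi_T\})\subseteq\hyperdiscriminantpolytopesurface$. Chaining this with the hypothesis (\ref{eq:convexhurwitz_chowpolytope}) yields the inclusion of polytopes in $\mathbb{R}^{N+1}$
$$\deg(\hyperdiscriminantsurface)\,\chowpolytope\subseteq\deg(\chowform)\,\conv(\{\xi_T\})\subseteq\deg(\chowform)\,\hyperdiscriminantpolytopesurface.$$
Applying $\pi$, which is linear and therefore satisfies $\pi(cP)=c\,\pi(P)$ and preserves inclusions, and inserting the two identities from the previous step, I arrive at
$$\deg(\hyperdiscriminantsurface)\,\mathcal{W}_{H_{\mathrm{st}}}(\chowform)\subseteq\deg(\chowform)\,\mathcal{W}_{H_{\mathrm{st}}}(\hyperdiscriminantsurface),$$
which is precisely (\ref{eq:ksemistable}) for $H=H_{\mathrm{st}}$, proving $K$-semistability of the pair.

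The computations here are immediate; the step requiring care is the identification $\mathcal{W}_{H_{\mathrm{st}}}(\,\cdot\,)=\pi(\mathcal{W}(\,\cdot\,))$. One must check both that $\pi$ is indeed the map induced by character restriction along the specified embedding $H_{\mathrm{st}}\hookrightarrow\torus$, and that $\deg(\hyperdiscriminantsurface)$ and $\deg(\chowform)$ denote the same (Pl\"ucker) degrees used in Paul's definition, so that the scalar factors on the two sides genuinely match after applying $\pi$. The only real conceptual hazard is the bookkeeping between the two tori---the diagonal torus $\torus$ acting on $\projectivespace$ versus its determinant-one subtorus $H_{\mathrm{st}}$---and ensuring that the $\torus$-level inclusion supplied by Theorem \ref{thm:main_intro} descends faithfully to the $H_{\mathrm{st}}$-weight polytopes figuring in (\ref{eq:ksemistable}).
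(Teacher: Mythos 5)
Your proposal is correct and follows essentially the same route as the paper: both arguments chain the hypothesis with Theorem \ref{thm:main_intro} to get the inclusion of scaled weight polytopes for the full torus $\torus$, and then push it forward along the linear projection $\mathbb{R}^{N+1}\to\mathbb{R}^N$, $(a_1,\ldots,a_{N+1})\mapsto(a_1-a_{N+1},\ldots,a_N-a_{N+1})$, induced by restricting characters to $H_{\mathrm{st}}$. The paper records the identifications $\Pi(\chowpolytope)=\mathcal{W}_{H_{\mathrm{st}}}(\chowform)$ and $\Pi(\hyperdiscriminantpolytope)=\mathcal{W}_{H_{\mathrm{st}}}(\hyperdiscriminant)$ as holding by definition, exactly as you verify them.
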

\begin{proof}
It is sufficient to see a relation with $\chowpolytopeh$ (resp. $\hyperdiscriminantpolytopeh$) and $\chowpolytope$ (resp. $\hyperdiscriminantpolytope$) (cf. \cite{yotsutani2019}).
Let $\Pi: \mathbb{R}^{N+1}\to \mathbb{R}^N$ be the projection defined by
$$
	(x_1,\ldots,x_{N-1},x_N) \mapsto
	(x_1-x_N, \ldots, x_{N-1}-x_N).
$$
By definition, we have
\begin{eqnarray*}
		\Pi(\chowpolytope)
	&=&
		\Delta_{H_{\mathrm{st}}}(\chowform),
	\\
		\Pi(\hyperdiscriminantpolytope)
	&=&
		\mathcal{W}_{H_{\mathrm{st}}}(\hyperdiscriminantpolytope).
\end{eqnarray*}
Hence, Theorem \ref{thm:main_intro} and (\ref{eq:convexhurwitz_chowpolytope}) implies (\ref{eq:ksemistable}).
The proof is completed.
\end{proof}

By definition of the Hurwitz vectors, we can expect that the convex hull $\conv(\{\xi_T\})$ would have similar combinatorial properties as the Chow polytope $\chowpolytope$.
For instance, we can prove the following directly from the definition of the Hurwitz vectors.
\begin{proposition}\label{prop:polytopes}
	Let $A$ be a point configuration $A \subset \mathbb{Z}^{2}$.
	Let $\polytope$ be its convex hull.
	Assume that the associated toric surface $\toricvariety$ is smooth.
	Let $\conv(\{\xi_T\})$ be the convex hull of the Hurwitz vectors $\xi_T$ of all regular triangulations $T$ on $(Q,A)$.
	Assume that the length of any edge of $Q$ is equal to one.
	Then the followings hold.
	\begin{enumerate}
		\item\label{obs:list-1}
			The number of the vertices of $\conv(\{\xi_T\})$ is equal to the number of the vertices of the Chow polytope $\chowpolytope$.
		\item\label{obs:list-2}
			The edges of $\conv(\{\xi_T\})$ are in one-to-one correspondence with the edges of $\chowpolytope$ such that they are parallel to one another.
			In particular, the convex hull $\conv(\{\xi_T\})$ is \textit{normally equivalent} to $\chowpolytope$, i.e., their normal fans coincide.
	\end{enumerate}
\end{proposition}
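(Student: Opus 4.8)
The plan is to reduce the whole statement to the observation that, under the hypothesis on the edge lengths of $Q$, the Hurwitz vector $\xi_T$ is obtained from the GKZ vector $\eta_{T,2}$ by one fixed affine map. Recall from Definition~\ref{def:hurwitzvector} that, for $n=2$, $\xi_T = 2\eta_{T,2} - \eta_{T,1}$. The secondary polytope $\secondarypoly$, which equals the Chow polytope $\chowpolytope$, has the GKZ vectors $\eta_{T,2}$ of the regular triangulations as its vertices, and these are in bijection with the regular triangulations. So if I can show that $\eta_{T,1}$ is the same vector for every $T$, then $\conv(\{\xi_T\})$ is a dilation-plus-translation of $\chowpolytope$, and both conclusions are immediate.

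First I would prove that $\eta_{T,1}$ is a constant vector $c$ independent of $T$. By definition $\eta_{T,1}(\omega_i)$ is the sum of $\vol(\sigma)$ over the massive $1$-simplices $\sigma \in \Sigma_T(1)$ containing $\omega_i$, that is, over those edges of the triangulation that lie on a $1$-dimensional face of $Q$. The hypothesis that every edge of $Q$ has lattice length one means that no point of $A$ lies in the relative interior of an edge of $Q$; hence each edge of $Q$ occurs as a single $1$-simplex in every triangulation, and these edges are exactly the massive $1$-simplices. Their set, and their volumes (all equal to $1$), are therefore the same for all $T$, so $c_i = \eta_{T,1}(\omega_i)$ is just the number of edges of $Q$ through $\omega_i$: it equals $2$ when $\omega_i$ is a vertex of $Q$ and $0$ otherwise. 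This is the one place where the edge-length hypothesis is genuinely used, and it is the step I expect to be the crux---without it, different regular triangulations may subdivide $\partial Q$ differently and $\eta_{T,1}$ need not be constant.

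With this established, I would conclude as follows. The affine isomorphism $\phi(x) = 2x - c$ sends $\eta_{T,2}$ to $\xi_T$ for every regular triangulation $T$, so, since affine maps commute with convex hulls, $\conv(\{\xi_T\}) = \phi(\chowpolytope) = 2\,\chowpolytope - c$. A dilation by the positive scalar $2$ followed by a translation preserves the face lattice and the normal fan of a polytope: it carries vertices to vertices and edges to edges, scales each edge-direction vector by $2$ (keeping it parallel to the original), and leaves the normal fan unchanged. Transporting the vertex--triangulation bijection for $\secondarypoly$ through $\phi$ then gives assertion~(\ref{obs:list-1}), and the parallel edge correspondence together with the normal equivalence gives assertion~(\ref{obs:list-2}). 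The remaining verifications are the routine facts about dilations and translations of polytopes and require no further input.
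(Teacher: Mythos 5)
Your proposal is correct and follows essentially the same route as the paper: both hinge on the observation that the edge-length-one hypothesis forces $\eta_{T,1}$ to be a fixed vector (equal to $2$ at vertices of $Q$ and $0$ elsewhere), so that $\xi_T$ is an affine image of the GKZ vector $\eta_{T,2}$ and $\conv(\{\xi_T\})$ inherits the vertex--triangulation bijection, edge directions, and normal fan of the secondary polytope $\secondarypoly=\chowpolytope$. Your explicit affine map $\phi(x)=2x-c$ is a clean way of packaging what the paper does by citing the proof of Theorem~1.7 in Chapter~7 of \cite{gkz94}, and it also quietly corrects the paper's displayed identity, which should read $\xi_T-\xi_{T'}=2(\eta_{T,2}-\eta_{T',2})$.
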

\begin{proof}
Since any edge of $Q$ has the length one, $\eta_{T,1}(\omega)$ is equal to two if $\omega$ is a vertex of $Q$ or zero if $\omega$ is an interior point of $Q$.
This implies that $\xi_{T}=\xi_{T’}$ if and only if $T=T’$, because 
\begin{equation}\label{eq:xi_and_eta}
	\xi_{T}-\xi_{T’}=\eta_{T,2}-\eta_{T’,2}.
\end{equation}
By the same proof of Theorem 1.7 in Chapter 7 \cite{gkz94}, we find that the set of the Hurwitz vectors of all regular triangulations coincides with  the set of the vertices of the convex hull $\conv(\{\xi_T\})$.
This proves the first statement.
The second statement also follows from (\ref{eq:xi_and_eta}).
The proof is completed.
\end{proof}

We confirm by using a computer that the above proposition still holds among the examples in the previous section (cf. the fourth column in Table \ref{table:reflexive}), in which $Q$ has an edge of length larger than two.
Assuming Conjecture \ref{conj:main}, the Hurwitz polytope also will have the properties in Proposition \ref{prop:polytopes}.
Such properties are pointed out by Sturmfels in Example 3.2  \cite{sturmfels17}.
In particular, he gives a counter-example which shows that the first statement in Proposition \ref{prop:polytopes} does not hold in general.
Provided that we knew what kind of varieties satisfy the properties in Proposition \ref{prop:polytopes}, it would be useful for further study of $K$-stability of pairs.

\subsection{Toric $K$-stability}
We describe the toric Non-Archimedean $K$-energy in toric $K$-stability of \cite{donaldson02} by the GKZ vectors and the Hurwitz vectors.

Let us recall the definitions.
Let $(X,L)$ be a polarized toric manifold of dimension $n$ with momentum polytope $Q\subset \mr=\mathbb{R}^n$.
For a convex, rational, piecewise-linear function $f$ on $Q$, we define a convex polytope 
$$
	\{
		(x,\lambda)\in \mr\times \mathbb{R}\mid\,
		x\in Q,\, f(x) \le \lambda \le \max f
	\}.
$$
This polytope corresponds to some toric degeneration $\pi: \overline{\mathcal{X}}\to\mathbb{P}^1$ of $X$, which is a compactification of a so-called toric test configuration, i.e., $\pi: \mathcal{X}:=\overline{\mathcal{X}}\backslash \{\pi^{-1}(\infty)\}\to \mathbb{C}$ is a $\mathbb{C}^\times$-equivariant flat family of polarized schemes where $\pi^{-1}(t)=(X,L)$ for $t\neq 0$.
For each function $f$ as above, we define 
$$
	L(f)= \int_{\partial Q} fd \nu - n\frac{\vol(\partial Q)}{\vol(Q)} \int_Q f dx.
$$
We call $L(f)$ the toric non-Archimedean $K$-energy with respect to $f$ (\cite{bhj2019, hisamoto16}).
The measure $dx$ denotes the Lebesgue measure and $d\nu$ is the measure on the boundary of $Q$ so that
$
	dx_1 \ldots dx_n = \pm d\nu\wedge dh.
$
In above, $h$ is the defining polynomial of a facet of $Q$ which is the form of
$$
	h(x)= \langle x, u \rangle +c	
$$
where $u$ is a primitive normal vector of the facet and $c$ is some constant.
Notice that 
$$
	\vol(Q) = n! \int_Q dx, \,\,
	\vol(\partial Q) = (n-1)!\int_{\partial Q} d\nu.
$$
We 
say that $(X,L)$ is toric $K$-stable if and only if $L(f)\ge 0$ for any $f$ and the equality holds only if $f$ is affine.

Now, we shall see that the toric non-Archimedean $K$-energy $L(f)$ can be written in terms of the GKZ vectors and the Hurwitz vectors.
Take a convex, rational, piecewise-linear function $f$ on $Q$.
By replacing $Q$ by $kQ$ for large integer $k$, we can assume that the vertical projection of the graph of the function $f$ provides a regular subdivision $T’$ of $(Q,A)$.
Here $A$ denotes a point configuration consisting of all lattice points on $Q$.
From \cite{drs10}, there exists a regular triangulation $T$ of $(Q,A)$, which is a refinement of $T’$.

\begin{proposition}\label{prop:donaldson-futaki}
	Let $T$ be a regular triangulation associated with a given convex piecewise-linear function $f$ on $Q$ as above. 
	Then we have
	\begin{eqnarray*}
		\nonumber
		&&
		(n+1)!
		\vol(Q)L(f)
		\\
		\nonumber
		&&
		\quad\quad
		= 
		\langle 
		f,
		n\deg(\hyperdiscriminant)\eta_{T,n}
		- 
		(n+1)\deg(\chowform)\xi_T 
		\rangle
		\\
		&&
		\quad\quad
		:= 
		\sum_{\omega\in A}
		f(\omega)\big(
		n\deg(\hyperdiscriminant)
		\eta_{T,n}(\omega)
		-
		(n+1)\deg(\chowform)\xi_T(\omega)
		\big).
	\end{eqnarray*}
\end{proposition}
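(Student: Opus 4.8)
The plan is to evaluate each of the two integrals appearing in $L(f)$ by integrating the piecewise-linear function $f$ simplex by simplex over the triangulation $T$, to convert the results into pairings with the GKZ vectors $\eta_{T,n}$ and $\eta_{T,n-1}$, and then to match coefficients using the degree formulas for $\chowform$ and $\hyperdiscriminant$ together with the defining identity $\xi_T = n\eta_{T,n} - \eta_{T,n-1}$. The key enabling observation is that, by construction, $T$ refines the regular subdivision $T'$ induced by the graph of $f$, so $f$ is affine on every maximal simplex $\sigma\in\Sigma_T(n)$ and on every massive $(n-1)$-simplex of $T$; this is exactly what makes the simplex-wise integration legitimate.

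First I would compute $\int_Q f\,dx$. On an $n$-simplex $\sigma$ with vertices $v_0,\dots,v_n$, an affine function satisfies $\int_\sigma f\,dx = \mathrm{Vol}_{\mathrm{Euc}}(\sigma)\cdot\frac{1}{n+1}\sum_{v\prec\sigma}f(v)$, and $\mathrm{Vol}_{\mathrm{Euc}}(\sigma)=\vol(\sigma)/n!$ by the normalization $\vol(Q)=n!\int_Q dx$. Summing over $\sigma\in\Sigma_T(n)$ and exchanging the order of summation via the definition (\ref{eq:gkzvector}) of $\eta_{T,n}(\omega)=\sum_{\omega\prec\sigma}\vol(\sigma)$ gives $\int_Q f\,dx=\frac{1}{(n+1)!}\langle f,\eta_{T,n}\rangle$. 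The identical argument applied to the massive $(n-1)$-simplices triangulating $\partial Q$, using the boundary normalization $\vol(\partial Q)=(n-1)!\int_{\partial Q}d\nu$, yields $\int_{\partial Q}f\,d\nu=\frac{1}{n!}\langle f,\eta_{T,n-1}\rangle$, where I write $\langle f,\eta_{T,k}\rangle:=\sum_{\omega\in A}f(\omega)\eta_{T,k}(\omega)$ as in the statement.

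Next I would substitute these into $(n+1)!\,\vol(Q)L(f)$, obtaining $(n+1)\vol(Q)\langle f,\eta_{T,n-1}\rangle - n\vol(\partial Q)\langle f,\eta_{T,n}\rangle$. To identify this with the claimed right-hand side, I expand $n\deg(\hyperdiscriminant)\eta_{T,n}-(n+1)\deg(\chowform)\xi_T$ using $\xi_T=n\eta_{T,n}-\eta_{T,n-1}$ and compare the coefficients of $\eta_{T,n-1}$ and $\eta_{T,n}$. The coefficient of $\eta_{T,n-1}$ matches precisely when $\deg(\chowform)=\vol(Q)$, and, granting this, the coefficient of $\eta_{T,n}$ matches precisely when $\deg(\hyperdiscriminant)=(n+1)\vol(Q)-\vol(\partial Q)$. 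Thus the proposition reduces to these two degree identities.

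The first identity, $\deg(\chowform)=\vol(Q)$, is the classical fact that the degree of a projective toric variety equals the normalized volume of its polytope. The second is the general-dimensional counterpart of Corollary \ref{cor:degree}: for $n=2$ it is exactly that corollary, while for general $n$ it is supplied by Paul's degree formula for the hyperdiscriminant (cf.\ Remark \ref{rem:degree}) or by \cite{sano}. I note that the purely combinatorial identity $\tfrac1n\sum_{\omega}\xi_T(\omega)=(n+1)\vol(Q)-\vol(\partial Q)$ holds unconditionally by double counting, since each $n$-simplex has $n+1$ vertices and each massive $(n-1)$-simplex has $n$ vertices, giving $\sum_\omega\eta_{T,n}(\omega)=(n+1)\vol(Q)$ and $\sum_\omega\eta_{T,n-1}(\omega)=n\vol(\partial Q)$. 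I expect the main obstacle to be the careful bookkeeping of the measure normalizations --- in particular, verifying facet by facet that the boundary measure $d\nu$ defined through $dx=\pm d\nu\wedge dh$ agrees with the normalized lattice $(n-1)$-volumes entering $\eta_{T,n-1}$ --- and, for general $n$, justifying the identification of the above combinatorial number with $\deg(\hyperdiscriminant)$, which this note establishes directly only in the surface case.
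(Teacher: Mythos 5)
Your proposal is correct and takes essentially the same route as the paper's proof: both integrate the affine pieces of $f$ simplex by simplex to obtain $\langle f,\eta_{T,n}\rangle=(n+1)!\int_Q f\,dx$ and $\langle f,\eta_{T,n-1}\rangle=n!\int_{\partial Q}f\,d\nu$, and then invoke $\deg(\chowform)=\vol(Q)$ together with Paul's formula $\deg(\hyperdiscriminant)=(n+1)\vol(Q)-\vol(\partial Q)$ to match coefficients. The paper likewise cites the affine-integration identity from Chapter 7 of \cite{gkz94} and formula (5.53) of \cite{paul12}, so your reduction to the two degree identities mirrors its argument exactly.
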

\begin{proof}
The proof follows essentially from the fact written in the proof of Lemma 1.8 in Chapter 7 \cite{gkz94}: for an affine function $g$ on an $n$-dimensional (integral) simplex, 
\begin{equation}\label{eq:affine_integral}
	\int_\sigma g dx= \frac{\mathrm{Vol}(\sigma,dx)}{n+1} \sum_{\omega} g(\omega)
	= \frac{\vol(\sigma)}{(n+1)!} \sum_{\omega} g(\omega)
\end{equation}
where $\omega$ runs through all vertices of $\sigma$.
Recall that 
\begin{equation}\label{eq:deg_deg}
	\degree = \deg(\chowform)= \vol(Q),\quad
	\mu= n\frac{\vol(\partial Q)}{\vol(Q)}
\end{equation}
where $\mu$ denotes the average of the scalar curvature as before.
Then the formula (5.53) in \cite{paul12} says that
\begin{eqnarray}
	\nonumber
		\deg(\hyperdiscriminant)
	&=&
		(n+1)\deg(X)-\frac{\deg(X)}{n}\mu
	\\
	\label{eq:degree_hurwitz_general}
	&=&
		(n+1)\vol(Q)- \vol(\partial Q).
\end{eqnarray}
Remark that $\deg(\hyperdiscriminant)$ is written in the Pl\"ucker coordinates.
By (\ref{eq:affine_integral}) and the definitions of $\eta_{T,n}$ and $\xi_{T}$, we have
\begin{equation}\label{eq:integral_GKZ}
		\langle f, \eta_{T,n} \rangle
	=
		(n+1)!\int_{Q} f dx,
\end{equation}
and 
\begin{equation}\label{eq:integral_Hurwitz}
		\langle f, \xi_T \rangle
	=
		n\cdot (n+1)!\int_{Q} f dx
		-	
		n! \int_{\partial Q} fd\nu.
\end{equation}
From (\ref{eq:deg_deg}), (\ref{eq:degree_hurwitz_general}), (\ref{eq:integral_GKZ}) and (\ref{eq:integral_Hurwitz}), we have the desired equality.
The proof is completed.
\end{proof}

\end{document}